\definecolor{uuuuuu}{rgb}{0.26666666666666666,0.26666666666666666,0.26666666666666666}
\definecolor{xdxdff}{rgb}{0.49019607843137253,0.49019607843137253,1.}
\definecolor{ffqqqq}{rgb}{1.,0.,0.}
\definecolor{uuuuuu}{rgb}{0.26666666666666666,0.26666666666666666,0.26666666666666666}
\definecolor{qqwuqq}{rgb}{0.,0.39215686274509803,0.}
\definecolor{zzttqq}{rgb}{0.6,0.2,0.}
\definecolor{xdxdff}{rgb}{0.49019607843137253,0.49019607843137253,1.}
\definecolor{qqqqff}{rgb}{0.,0.,1.}
\definecolor{cqcqcq}{rgb}{0.7529411764705882,0.7529411764705882,0.7529411764705882}
\definecolor{sqsqsq}{rgb}{0.12549019607843137,0.12549019607843137,0.12549019607843137}
\theoremstyle{plain}
\newtheorem{theorem}[subsection]{Theorem}
\newtheorem{lemma}[subsection]{Lemma}
\newtheorem{prop}[subsection]{Proposition}
\newtheorem{cor}[subsection]{Corollary}
\newtheorem{example}[subsection]{Example}
\theoremstyle{definition}
\newtheorem{remark}[subsection]{Remark}
\newtheorem{note}[subsection]{Note}
\newcommand{\uu}{\cup}
\newcommand{\ii}{\cap}
\newcommand{\UU}{\bigcup}
\newcommand{\II}{\bigcap}
\newcommand{\sci}{\subset}
\newcommand{\es}{\emptyset}
\newcommand{\set}[1]{\{#1\}}
\newcommand{\ga}{\alpha}
\newcommand{\gb}{\beta}
\newcommand{\gd}{\delta}
\renewcommand{\gg}{\gamma}
\newcommand{\go}{\omega}
\newcommand{\gs}{\sigma}
\newcommand{\gt}{\tau}
\newcommand{\tit}{\textit}
\newcommand{\D}[1]{\mathbb{#1}}
\newcommand{\F}[1]{\mathfrak{#1}}
\newcommand{\te}{\text}
\newcommand{\tri}{\triangle}
\begin{document}

To appear,  Monatshefte f\"ur Mathematik

\title{Quantization for uniform distributions on stretched Sierpi\'nski triangles}

\author{Do\u gan \c C\"omez}
\address{Department of Mathematics \\
408E24 Minard Hall,
North Dakota State University
\\
Fargo, ND 58108-6050, USA.}
\email{Dogan.Comez@ndsu.edu}

\author{ Mrinal Kanti Roychowdhury}
\address{School of Mathematical and Statistical Sciences\\
University of Texas Rio Grande Valley\\
1201 West University Drive\\
Edinburg, TX 78539-2999, USA.}
\email{mrinal.roychowdhury@utrgv.edu}

\subjclass[2010]{60Exx, 28A80, 94A34.}
\keywords{Stretched Sierpi\'nski triangle, probability measure, optimal quantizers, quantization error, quantization dimension, quantization coefficient}
\thanks{The research of the second author was supported by U.S. National Security Agency (NSA) Grant H98230-14-1-0320}

\date{}
\maketitle

\pagestyle{myheadings}\markboth{Do\u gan \c C\"omez and Mrinal Kanti Roychowdhury}{Quantization for uniform distributions on stretched Sierpi\'nski triangles}

\begin{abstract} In this paper, we have considered a uniform probability distribution supported by a stretched Sierpi\'nski triangle. For this probability measure, the optimal sets of $n$-means and the $n$th quantization errors are determined for all $n\geq 2$. In addition, it is shown that the quantization coefficient for such a measure does not exist though the quantization dimension exists.

\end{abstract}

\section{Introduction}  The theory of quantization studies the process of approximating probability measures, which are invariant for certain systems, with discrete probabilities having a finite number of points in their support. Of particular interest are the types of behaviors which may be  encountered in the quantization process for various measures. For an extensive survey of the history of the subject one is referred to \cite{GN}. For mathematical foundation of quantization theory one is referred to \cite{GL2, GL3}. The same mathematical results are used in pattern recognition (optimal sets of prototypes), economics (optimal location of service centers), numerical integration (optimal location of knots) and the theory of convex sets (optimal approximation by polytopes). Let us consider a Borel probability measure $P$ on $\D R^d$ and a natural number $n \in \D N$.  Then, the $n$th \tit{quantization error} for $P$ is defined by:
\[V_n:=V_n(P)=\te{inf}\set{\int \min_{a\in \ga} \|x-a\|^2 dP(x): \ga \sci \D R^d, \, \te{card}(\ga) \leq n},\]
where $\|\cdot\|$ denotes the Euclidean norm on $\D R^d$. A set $\ga$ for which the infimum is achieved and contains no more than $n$ elements is called an \textit{optimal set of $n$-means} for $P$, and the elements in an optimal set are called \tit{optimal quantizers}. Of course, this makes sense only if the mean squared error or the expected squared Euclidean distance $\int \| x\|^2 dP(x)$ is finite (see \cite{AW, GKL, GL1, GL2}). It is known that for a continuous probability measure an optimal set of $n$-means always has exactly $n$-elements (see \cite{GL2}). The number
$D(P):=\lim_{n\to \infty}  \frac{2\log n}{-\log V_n(P)}$, if it exists, is called the \tit{quantization dimension} of the probability measure $P$; on the other hand, for any $s\in (0, +\infty)$, the number $\lim_n n^{\frac 2 s} V_n(P)$, if it exists, is called the $s$-dimensional \tit{quantization coefficient} for $P$. For more details about the quantization dimension and the quantization coefficient, and their connections, one can refer to \cite{GL2, P}.

Let us now state the following proposition (see \cite{GG, GL2}):
\begin{prop} \label{prop10}
Let $\alpha$ be an optimal set of $n$-means and $a\in \ga$. Then,

$(i)$ $P(M(a|\ga))>0$, $(ii)$ $ P(\partial M(a|\ga))=0$, $(iii)$ $a=E(X : X \in M(a|\ga))$,
where $M(a|\ga)$ is the Voronoi region of $a\in \ga , $ i.e.,  $M(a|\ga)$ is the set of all elements $x$ in $\D R^d$ which are closest to $a$ among all the elements in $\ga$.
\end{prop}
\noindent Since
\begin{align*}
E(X : X \in M(a|\ga))=\frac{1}{P(M(a|\ga))}\int_{M(a|\ga)} x dP=\frac{\int_{M(a|\ga)} x dP}{\int_{M(a|\ga)} dP},
\end{align*}
we can say that each element in an optimal set is the centroid, which is actually the conditional expectation, of its own Voronoi region (see also \cite{DFG, R}).

Let $P$ be a Borel probability measure on $\D R$ given by $P=\frac 12 P\circ S_1^{-1}+\frac 12 P\circ S_2^{-1}$ where $S_1(x)=\frac 13x$ and $S_2(x)=\frac 13 x +\frac 23$ for all $x\in \D R$. Then, $P$ has support the classical Cantor set $C$.  For this probability measure Graf and Luschgy gave an exact formula to determine the optimal sets of $n$-means and the $n$th quantization errors for all $n\geq 2$; they also proved that the quantization dimension of this distribution exists and is equal to the Hausdorff dimension $\gb:=\frac{\log 2}{\log 3}$ of the Cantor set, but the $\gb$-dimensional quantization coefficient does not exist \cite{GL3}. In \cite{R1}, the second author gave the bounds of the above exact formula given by Graf and Luschgy. In \cite{R2}, he determined the optimal sets of $n$-means and the $n$th quantization errors for the Cantor distribution generated by infinite similitudes.

Let us now consider a set of three contractive similarity mappings $S_1, S_2, S_3$ on $\D R^2$, such that
 $S_1(x_1, x_2)=\frac 13 (x_1, x_2)$, $S_2(x_1, x_2)=\frac 13 (x_1, x_2)+\frac 23(1, 0)$, and  $S_3(x_1, x_2)=\frac 13 (x_1, x_2)+ \frac 23(\frac 12, \frac {\sqrt{3}}{2})$ for all $(x_1, x_2) \in \D R^2$.  The limit set $S$ of the iterated function system $\{S_i \}_{i=1}^3 $ is a version of the Sierpi\`nski triangle, which is constructed as follows:
$(i)$ Start with an equilateral triangle;  $(ii)$ delete the open middle third from each side of the triangle and join the end points of the adjacent sides to construct three smaller congruent equilateral triangles;  $(iii)$ repeat step (ii) with each of the remaining smaller triangles.  At each step the new triangles appear as radiated from the center of the triangle in the previous step towards the vertices. In order to distinguish it from the classical Sierpi\`nski triangle we will call it the \tit{stretched Sierpi\`nski triangle}.
It is easy to see that the area and the circumference of a stretched Sierpi\`nski triangle are zero and it has the Hausdorff dimension one (see also Section 4).  Let $P=\frac 13\mathop{\sum}_{j=1}^3 P\circ S_j^{-1}$. Then, $P$ is a unique Borel probability measure on $\D R^2$ with support the stretched Sierpi\`nski triangle generated by $S_1, S_2, S_3$.  For this probability measure $P$, in this paper, we determine the optimal sets of $n$-means and the $n$th quantization errors for all $n\geq 2$. In Theorem~\ref{Th4}, we further show that although the quantization dimension exists, the quantization coefficient for the probability measure $P$ does not exist.

\section{Basic definitions and lemmas}
In this section, we give the basic definitions and lemmas that will be instrumental in our analysis. Let $S_1, S_2$ and $S_3$ be the generating maps of the stretched Sierpi\`nski triangle as defined in the previous section. By a word $\go$ of length $k$ over the alphabet $I:=\set{1, 2, 3}$, it is meant that $\go:=\go_1\go_2\cdots \go_k \in I^k$, where $k\geq 1$. A word of length zero is called the empty word and is denoted by $\es$. By $I^\ast$, we denote the set of all words over the alphabet $I$ including the empty word $\es$. By the concatenation of two words $\go:=\go_1\go_2\cdots \go_k$ and $\gt:=\gt_1\gt_2\cdots \gt_\ell$, denoted by $\go\gt$, it is meant $\go\gt:=\go_1\cdots \go_k\gt_1\cdots \gt_\ell$. For $\go=\go_1\go_2 \cdots\go_k \in I^k$, set $S_\go:=S_{\go_1} \circ \cdots \circ S_{\go_k}$. Let $\tri$ be the equilateral triangle with vertices $(0, 0)$, $(1, 0)$ and $(\frac 12, \frac {\sqrt{3}} {2})$. The sets $\set{\tri_\go : \go \in I^k}$ are just the $3^k$ triangles in the $k$th level in the construction of the stretched Sierpi\`nski triangle. The triangles $\tri_{\go1}$, $\tri_{\go2}$ and $\tri_{\go3}$ into which $\tri_\go$ is split up at the $(k+1)$th level are called the \tit{basic triangles} of $\tri_\go$. The set $S:=\II_{k \in \D N} \UU_{\go \in I^k} \tri_\go$ is the stretched Sierpi\`nski triangle and equals the support of the probability measure $P$ given by $P=\frac 13\mathop{\sum}\limits_{j=1}^3 P\circ S_j^{-1}$. It follows that, by induction,
$P=\frac 1 {3^k} \sum_{\go \in I^k}  P\circ S_\go^{-1} $ for any $\go\in I^k$, where $k\geq 1$.  Hence, we have the following lemma:

\begin{lemma} \label{lemma1} Let $f: \D R \to \D R^+$ be Borel measurable and $k\in \D N$. Then, for all $\go \in I^*, $
\[\int f \,dP=\frac 1 {3^k} \sum_{\go \in I^k}\int f\circ S_\go \,dP.\]
\end{lemma}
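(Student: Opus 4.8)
The plan is to prove the identity by induction on $k$, deriving the base case $k=1$ directly from the self-similarity of $P$ and handling the inductive step by composing similarity maps. For the base case, I would start from the defining relation $P=\frac 13\sum_{j=1}^3 P\circ S_j^{-1}$ and use linearity of the integral to write
\[\int f\,dP=\frac 13\sum_{j=1}^3\int f\,d(P\circ S_j^{-1}).\]
The key tool is then the transformation (change-of-variables) theorem for image measures, which for a Borel measurable $f\geq 0$ and a measurable map $T$ gives $\int f\,d(P\circ T^{-1})=\int f\circ T\,dP$ with no integrability hypothesis needed, since $f$ is nonnegative so both sides lie in $[0,+\infty]$. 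Applying this with $T=S_j$ yields
\[\int f\,dP=\frac 13\sum_{j=1}^3\int f\circ S_j\,dP,\]
which is exactly the asserted formula for $k=1$.

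For the inductive step, I would assume the statement for a fixed $k$ and prove it for $k+1$. Applying the induction hypothesis gives $\int f\,dP=\frac{1}{3^k}\sum_{\go\in I^k}\int f\circ S_\go\,dP$. The idea is to apply the base case $k=1$ to each integrand $f\circ S_\go$ (which is again Borel measurable and nonnegative), obtaining
\[\int (f\circ S_\go)\,dP=\frac 13\sum_{j=1}^3\int (f\circ S_\go)\circ S_j\,dP=\frac 13\sum_{j=1}^3\int f\circ S_{\go j}\,dP,\]
where the second equality uses the composition identity $S_\go\circ S_j=S_{\go j}$ coming from the definition $S_\go=S_{\go_1}\circ\cdots\circ S_{\go_k}$ together with the concatenation of words. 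Substituting this back and noting that the map $(\go,j)\mapsto \go j$ is a bijection from $I^k\times I$ onto $I^{k+1}$ collapses the double sum into a single sum over $\gt\in I^{k+1}$, giving
\[\int f\,dP=\frac{1}{3^{k+1}}\sum_{\gt\in I^{k+1}}\int f\circ S_\gt\,dP,\]
which closes the induction.

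I do not expect any genuine obstacle here: the result is essentially a restatement of the $P$-invariance $P=\frac 13\sum_j P\circ S_j^{-1}$ iterated $k$ times. The only two points requiring care are that the transformation theorem is invoked in its measure-theoretic form valid for all nonnegative measurable $f$ (so that one never has to worry about whether $\int f\,dP$ is finite, which is why the lemma is stated for $f$ mapping into $\D R^+$), and the bookkeeping identity $S_\go\circ S_j=S_{\go j}$ that guarantees the nested compositions reindex correctly as words of length $k+1$. Once these are in place, the computation is routine.
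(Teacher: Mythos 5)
Your proof is correct and takes essentially the same route as the paper: the paper obtains by induction the measure-level identity $P=\frac{1}{3^k}\sum_{\go\in I^k}P\circ S_\go^{-1}$ and then reads off the lemma via the change-of-variables formula for image measures, whereas you run the same induction directly on the integral identity, invoking the change-of-variables theorem and the composition rule $S_\go\circ S_j=S_{\go j}$ at each step. The two arguments use identical ingredients, so this is just a repackaging of the paper's (much terser) proof.
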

Let $S_{(i1)}, \, S_{(i2)}$ be the horizontal and vertical components of the transformations $S_i$ for $1\leq i\leq 3$. Then, for any $(x_1, x_2) \in \D R^2$ we have
$S_{(11)}(x_1) =\frac 1 3 x_1$, $ S_{(12)}(x_2)=\frac 1 3 x_2$, $S_{(21)}(x_1)=\frac 1 3 x_1 +\frac 23$, $S_{(22)}(x_2)=\frac 1 3 x_2$, $S_{(31)}(x_1)=\frac 1 3 x_1 +\frac 13$, and $S_{(32)}(x_2)= \frac 1 3 x_2+ \frac {\sqrt{3}} 3$. Let $X:=(X_1, X_2)$ be a bivariate random variable with distribution $P$. Let $P_1, P_2$ be the marginal distributions of $P$, i.e., $P_1(A)=P(A\times \D R)=P\circ \pi_1^{-1} (A) $ for all $A \in \F B$, and $P_2(B)=P(\D R \times B)=P\circ \pi_2^{-1} (B)$ for all $B \in \F B$, where $\pi_1, \pi_2$ are two projection mappings given by $\pi_1(x_1, x_2)=x_1$ and $\pi_2 (x_1, x_2)=x_2$ for all $(x_1, x_2) \in \D R^2$, and $\F B$ is the Borel $\gs$-algebra on $\D R$. Then $X_1$ has distribution $P_1$ and $X_2$ has distribution $P_2$.

The statement below provides the connection between $P$ and its marginal distributions via the components of the generating maps $S_i$. The proof is not difficult to see.

\begin{lemma} Let $P_1$ and $P_2$ be the marginal distributions of the probability measure $P$. Then,
\begin{itemize}
\item[•]
$P_1 =\frac 1 3 P_1 \circ S_{(11)}^{-1} + \frac 1 3 P_1\circ S_{(21)}^{-1} +\frac 1 3  P_1\circ S_{(31)}^{-1}$  and
$P_2 =\frac 1 3 P_2 \circ S_{(12)}^{-1} + \frac 1 3P_2\circ S_{(22)}^{-1} +\frac 1 3  P_2\circ S_{(32)}^{-1}$.
\end{itemize}
\end{lemma}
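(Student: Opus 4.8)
The plan is to read off both identities directly from the self-similarity relation $P=\frac 13\sum_{j=1}^3 P\circ S_j^{-1}$, exploiting the fact that each $S_j$ factors as a product of its two coordinate maps. The one structural observation I would record first is that, since $S_j(x_1,x_2)=(S_{(j1)}(x_1),S_{(j2)}(x_2))$ for $1\le j\le 3$, the preimage of any measurable rectangle splits: for Borel sets $B_1,B_2\ci\D R$ one has $S_j^{-1}(B_1\times B_2)=S_{(j1)}^{-1}(B_1)\times S_{(j2)}^{-1}(B_2)$. This is immediate from the explicit affine formulas for the coordinate maps, and it is the only fact beyond the self-similarity of $P$ that the argument needs.

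For the first identity, I would fix $A\in\F B$ and recall that $P_1(A)=P(A\times\D R)$. Applying the self-similarity of $P$ to the rectangle $A\times\D R$ and then the product factorization gives
\[
P_1(A)=P(A\times\D R)=\frac 13\sum_{j=1}^3 P\bigl(S_j^{-1}(A\times\D R)\bigr)=\frac 13\sum_{j=1}^3 P\bigl(S_{(j1)}^{-1}(A)\times S_{(j2)}^{-1}(\D R)\bigr).
\]
Since each $S_{(j2)}$ is an affine bijection of $\D R$, we have $S_{(j2)}^{-1}(\D R)=\D R$, so each summand collapses to $P\bigl(S_{(j1)}^{-1}(A)\times\D R\bigr)=P_1\bigl(S_{(j1)}^{-1}(A)\bigr)=(P_1\circ S_{(j1)}^{-1})(A)$. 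As $A$ was arbitrary, this yields $P_1=\frac 13\sum_{j=1}^3 P_1\circ S_{(j1)}^{-1}$, which is the first asserted identity.

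The second identity is entirely symmetric: for $B\in\F B$ one writes $P_2(B)=P(\D R\times B)$, applies the same two steps with the roles of the coordinates interchanged, uses $S_{(j1)}^{-1}(\D R)=\D R$, and identifies each surviving term as $(P_2\circ S_{(j2)}^{-1})(B)$, giving $P_2=\frac 13\sum_{j=1}^3 P_2\circ S_{(j2)}^{-1}$.

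There is no genuine obstacle here; the entire content is that the generating maps of the R-triangle are products of their coordinate maps, which forces the marginals of the self-similar product system to be again self-similar under those coordinate maps. The only point deserving a word of care is the factorization of $S_j^{-1}$ on rectangles together with the observation $S_{(ji)}^{-1}(\D R)=\D R$, both of which follow at once from the explicit affine expressions for $S_{(11)},\dots,S_{(32)}$ listed before the statement.
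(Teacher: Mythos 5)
Your proof is correct and is essentially the argument the paper intends: the paper itself omits the proof, deferring to the analogous Lemma~2.2 of \cite{CR}, and that proof is exactly the rectangle-factorization computation you give (evaluate both sides on $A\times\D R$, resp.\ $\D R\times B$, use $S_j^{-1}(B_1\times B_2)=S_{(j1)}^{-1}(B_1)\times S_{(j2)}^{-1}(B_2)$ and $S_{(j2)}^{-1}(\D R)=\D R$). Nothing further is needed.
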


For words $\gb, \gg, \cdots, \gd$ in $I^\ast$, by $a(\gb, \gg, \cdots, \gd)$ we mean the conditional expectation of the random variable $X$ given $\tri_\gb\uu \tri_\gg \uu\cdots \uu \tri_\gd,$ i.e.,
\begin{equation} \label{eq0}a(\gb, \gg, \cdots, \gd)=E(X|X\in \tri_\gb \uu \tri_\gg \uu \cdots \uu \tri_\gd)=\frac{1}{P(\tri_\gb\uu \cdots \uu \tri_\gd)}\int_{\tri_\gb\uu \cdots \uu \tri_\gd} x dP.
\end{equation}
\begin{lemma} \label{lemma333} Let $E(X)$ and $V(X)$ denote the the expectation and the variance of the random variable $X$. Then, \[E(X)=(E(X_1), \, E(X_2))=(\frac 12, \frac {\sqrt{3}} 6) \te{ and } V:=V(X)=E\|X-(\frac 1 2, \frac 1 2)\|^2=\frac 1 6.\]
\end{lemma}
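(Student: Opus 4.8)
The plan is to exploit the self-similarity of $P$ to convert the computation of the first and second moments of the marginals $X_1$ and $X_2$ into solvable linear fixed-point equations, and then to assemble the variance from these. Throughout I would use the identity $V(X)=E\|X-E(X)\|^2=V(X_1)+V(X_2)$, valid for any square-integrable random vector regardless of the correlation between its coordinates, so that the two-dimensional problem splits into two one-dimensional ones handled by the horizontal and vertical components $S_{(i1)}$ and $S_{(i2)}$.

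First I would compute the expectations. Applying Lemma~\ref{lemma1} with $k=1$ to the coordinate function $f(x_1,x_2)=x_1$ (equivalently, using the self-similar identity for the marginal $P_1$) gives $E(X_1)=\frac13\sum_{i=1}^3 E\big(S_{(i1)}(X_1)\big)$. Since each $S_{(i1)}$ is affine with slope $\frac13$, the right-hand side is an affine function of $m_1:=E(X_1)$, and solving the resulting linear equation yields $m_1=\frac12$. The same argument applied to the vertical components $S_{(i2)}$ produces a linear equation for $m_2:=E(X_2)$ whose solution is $m_2=\frac{\sqrt3}{6}$; here only $S_{(32)}$ carries a nonzero constant term, which is precisely what pulls the mean above $0$.

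Next I would compute the second moments by the same device. Expanding $S_{(i1)}(X_1)^2=\big(\frac13 X_1+c_i\big)^2$ and taking expectations, Lemma~\ref{lemma1} gives a linear equation for $E(X_1^2)$ in which the already-known value of $m_1$ enters through the cross terms; solving it gives $E(X_1^2)=\frac13$, and the analogous vertical computation gives $E(X_2^2)=\frac16$. Hence $V(X_1)=E(X_1^2)-m_1^2=\frac13-\frac14=\frac1{12}$ and $V(X_2)=E(X_2^2)-m_2^2=\frac16-\frac1{12}=\frac1{12}$, so that $V(X)=V(X_1)+V(X_2)=\frac16$. I would record the variance in the form $E\|X-E(X)\|^2$, centered at the true mean $(\frac12,\frac{\sqrt3}{6})$; by the parallel-axis identity $E\|X-a\|^2=V(X)+\|a-E(X)\|^2$ this is simultaneously the minimum of $E\|X-a\|^2$ over all $a\in\D R^2$, an identity I expect to reuse constantly when locating optimal quantizers later.

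There is no serious obstacle here: the entire argument is a pair of linear recursions, and the only real work is bookkeeping the constant terms in the affine maps and keeping the vertical (irrational) computation separate from the horizontal one. The single point to watch is that the center used in the variance must be the actual expectation $(\frac12,\frac{\sqrt3}{6})$ in order for the clean value $\frac16$ to come out, since centering at any other point $a$ would add the extra term $\|a-E(X)\|^2$.
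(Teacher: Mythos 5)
Your proposal is correct and follows essentially the same route as the paper: both derive the first and second moments of the marginals $X_1,X_2$ from the self-similarity relations of $P_1,P_2$ (equivalently Lemma~\ref{lemma1} with $k=1$) as linear fixed-point equations, then assemble $V(X)=V(X_1)+V(X_2)=\frac{1}{12}+\frac{1}{12}=\frac16$. You also correctly center the variance at the true mean $(\frac12,\frac{\sqrt3}{6})$ — which is what the paper's proof actually computes, the $(\frac12,\frac12)$ in the lemma's statement being a typo — and your parallel-axis remark matches the identity recorded in Note~\ref{note1}.
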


\begin{proof} We have
\begin{align*}
E(X_1)&=\int x_1 dP_1
=\frac 1 3 \Big[ \int x_1 dP_1 \circ S_{(11)}^{-1} +  \int x_1 dP_1\circ S_{(21)}^{-1}+  \int x_1 dP_1\circ S_{(31)}^{-1} \Big] \\
&=\frac 1 3 \Big[ \int \frac 13 x_1 dP_1 +  \int (\frac 1 3 x_1+\frac  23) dP_1 + \int (\frac 13 x_1+\frac 13) dP_1 \Big],
\end{align*}
which implies $E(X_1)=\frac 1 2$ and similarly, one can show that $E(X_2)=\frac {\sqrt 3}{6}$. Now,
\begin{align*}
E(X_1^2)&=\int x_1^2 \, dP_1=\frac 1 3 \int x_1^2\, dP_1 \circ S_{(11)}^{-1} + \frac 1 3 \int x_1^2 dP_1\circ S_{(21)}^{-1} +\frac 1 3  \int x_1^2 dP_1\circ S_{(31)}^{-1}\\
&=\frac 1 3 \Big[ \int (\frac 13 x_1)^2 dP_1 +  \int (\frac 1 3 x_1+\frac  23)^2dP_1 +  \int (\frac 13 x_1+\frac 1 3)^2  dP_1 \Big] \\
&=\frac 3{27} E(X_1^2)+\frac 6{27} E(X_1) +\frac {5}{27}=\frac 1 9 E(X_1^2)+\frac 8{27},
\end{align*}
which implies  $E(X_1^2)=\frac 1 {3}$. Thus, we see that
$V(X_1)=E(X_1^2)-(E(X_1))^2=\frac{1}{3} - \frac 1 4 =\frac 1 {12}$. Similarly, one can show that $V(X_2)=\frac 1{12}$. Hence,
 \begin{align*} & E\|X-(\frac 12, \frac {\sqrt{3}}{6})\|^2=\iint_{\D R^2} \Big((x_1-\frac 12)^2+(x_2-  \frac {\sqrt{3}}{6})^2\Big) dP(x_1, x_2)\\
&=\int(x_1-\frac 12)^2 dP_1(x_1)+\int (x_2- \frac {\sqrt{3}}{6})^2 dP_2(x_2)=V(X_1)+V(X_2)=\frac 1 6,
\end{align*}
which completes the proof of the lemma.
\end{proof}

\begin{note} \label{note1} From Lemma~\ref{lemma333}  it follows that the optimal set of one-mean is the expected value and the corresponding quantization error is the variance $V$ of the random variable $X$.
For $\go \in I^k$, $k\geq 1$, since $a(\go)=E(X : X \in J_\go)$, using Lemma~\ref{lemma1}, we have
\begin{align*}
&a(\go)=\frac{1}{P(\tri_\go)} \int_{\tri_\go} x \,dP(x)=\int_{\tri_\go} x\, dP\circ S_\go^{-1}(x)=\int S_\go(x)\, dP(x)=E(S_\go(X))=S_\go(\frac 12, \frac {\sqrt{3}}{6}).
\end{align*}  For any $(a, b) \in \D R^2$, $E\|X-(a, b)\|^2=V+\|(\frac 12, \frac {\sqrt{3}} 6)-(a, b)\|^2.$
In fact, for any $\go \in I^k$, $k\geq 1$, we have
$\int_{\tri_\go}\|x-(a, b)\|^2 dP= \frac{1}{3^k} \int\|(x_1, x_2) -(a, b)\|^2 dP\circ S_\go^{-1},$
which implies
\begin{equation} \label{eq1}
\int_{\tri_\go}\|x-(a, b)\|^2 dP=\frac{1}{3^k}\Big(\frac{1}{9^k}V+\|a(\go)-(a, b)\|^2\Big).
\end{equation}
\end{note}

\bigskip

\section{Optimal sets of $n$-means for all $n\geq 2$}
 Recall that $\ga_n$ represents an optimal set of $n$-means for all $n\geq 1$. For $k\geq 0$ and $\go \in I^k$,  by $a(\go)$ it is meant $a(\go)=S_\go(E(X))$. Also, recall the notation given by \eqref{eq0}. The work in this section involves some straightforward and lengthy computations, for which, in some parts, we have used Mathematica. For the readers' convenience, in Section~\ref{sec4}, we have given the Mathematica code and user guide to let the readers know how the code was used in computations.

Below, when we state that the stretched Sierpi\`nski triangle is  symmetric with respect to the probability distribution $P$, it is meant that if the two basic triangles of similar geometrical shape lie in the opposite sides of a median, and are equidistant from the median, then they have the same probability.

\begin{prop}\label{prop1} The set $\set{(\frac{1}{2},\frac{7}{6 \sqrt{3}}), (\frac{1}{2},\frac{1}{6 \sqrt{3}})}$ is an optimal set of two-means with quantization error $V_2=\frac{5}{54}=0.0925926$.
\end{prop}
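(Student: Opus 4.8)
The plan is to prove the two bounds $V_2\le \frac{5}{54}$ and $V_2\ge \frac{5}{54}$ separately, and the decisive structural fact throughout is that $P$ is invariant under the full dihedral symmetry group $D_3$ of the equilateral triangle $\tri$ about its centroid $E(X)=(\frac12,\frac{\sqrt3}{6})$. In particular the reflection $\rho(x_1,x_2)=(1-x_1,x_2)$ in the line $x_1=\frac12$ satisfies $\rho\circ S_1=S_2\circ\rho$, $\rho\circ S_2=S_1\circ\rho$, $\rho\circ S_3=S_3\circ\rho$, so that $P\circ\rho^{-1}=P$; the two rotations by $\pm120^\circ$ about $E(X)$ likewise conjugate the IFS to itself and preserve $P$.

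First I would identify the two given points as conditional centroids. By Note~\ref{note1}, $a(3)=S_3(E(X))=(\frac12,\frac{7}{6\sqrt3})$, and since $P(\tri_1)=P(\tri_2)=\frac13$ the centroid of $\tri_1\cup\tri_2$ is $a(1,2)=\frac12\big(a(1)+a(2)\big)=(\frac12,\frac{1}{6\sqrt3})$. Because $\tri_3$ lies in the strip $x_2\ge\frac{\sqrt3}{3}$ while $\tri_1\cup\tri_2$ lies in $x_2\le\frac{\sqrt3}{6}$, the perpendicular bisector of $a(3)$ and $a(1,2)$ — the horizontal line $x_2=\frac{2\sqrt3}{9}$ — passes through the empty gap between the two clusters, so $\{\tri_3,\ \tri_1\cup\tri_2\}$ is exactly the Voronoi partition induced by these two points. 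Applying \eqref{eq1} to the basic triangles then gives
\[
\int_{\tri_3}\|x-a(3)\|^2\,dP=\frac{1}{162},\qquad \int_{\tri_1\cup\tri_2}\|x-a(1,2)\|^2\,dP=\frac23\Big(\frac{1}{54}+\frac19\Big)=\frac{14}{162},
\]
so that $V_2\le\frac{1}{162}+\frac{14}{162}=\frac{5}{54}$.

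For the reverse inequality I would pass to the between-cluster form of the distortion. By Proposition~\ref{prop10} an optimal set $\{c_1,c_2\}$ consists of the centroids of its two Voronoi regions $H_1,H_2$, of masses $w_1,w_2$ with $w_1+w_2=1$; writing $E(X)=w_1c_1+w_2c_2$ and using Lemma~\ref{lemma333}, the variance decomposition gives
\[
V_2=\int\min_{i}\|x-c_i\|^2\,dP=V-w_1w_2\,\|c_1-c_2\|^2=\frac16-w_1w_2\,\|c_1-c_2\|^2,
\]
so minimizing the distortion is the same as maximizing $w_1w_2\|c_1-c_2\|^2$ over all partitions of the support by a line. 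The three centroids $a(1),a(2),a(3)$ form an equilateral triangle of side $\frac23$, and each grouping $\{i\}$ versus $\{j,k\}$ gives $w_1w_2=\frac29$ and $\|c_1-c_2\|^2=\frac13$ (the squared median length), hence the value $\frac{2}{27}$; note $\frac16-\frac{2}{27}=\frac{5}{54}$, and by $D_3$-invariance all three groupings are equivalent, which already shows the optimal set is far from unique.

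The crux, and the step I expect to be hardest, is to show that no separating line can push $w_1w_2\|c_1-c_2\|^2$ above $\frac{2}{27}$. Using the dihedral symmetry I would reduce the orientation of the line up to $D_3$, and then argue that any line cutting through one of the basic triangles is beaten by one that leaves all three intact: the self-similar structure of $P$ bounds the centroid displacement contributed by a severed sub-triangle, while the vertical gap between $\tri_3$ and $\tri_1\cup\tri_2$ keeps the separation $\|c_1-c_2\|^2$ from growing enough to compensate the loss in the mass product $w_1w_2$. As a representative check, the symmetric vertical split $x_1=\frac12$ bisects $\tri_3$ and yields only $w_1w_2\|c_1-c_2\|^2=\frac1{16}<\frac{2}{27}$ — the centroid there being computed from a geometric series over the truncated apex $\tri_{3^k}$. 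Delivering the uniform bound $w_1w_2\|c_1-c_2\|^2\le\frac{2}{27}$ over all lines is the technical heart; once it is in place, it combines with the upper bound to give $V_2=\frac{5}{54}$ and certifies $\{(\frac12,\frac{7}{6\sqrt3}),(\frac12,\frac{1}{6\sqrt3})\}$ as optimal.
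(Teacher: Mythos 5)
Your proposal establishes only half of the proposition. The upper bound is complete and correct: the identifications $a(3)=(\frac12,\frac{7}{6\sqrt3})$ and $a(1,2)=(\frac12,\frac{1}{6\sqrt3})$, the observation that the perpendicular bisector $x_2=\frac{2\sqrt3}{9}$ separates $\tri_3$ from $\tri_1\uu\tri_2$, and the computation via \eqref{eq1} giving $\frac1{162}+\frac{14}{162}=\frac5{54}$ are all right. The reduction of the lower bound through the variance decomposition $V_2=V-w_1w_2\|c_1-c_2\|^2$ is also valid (it follows from Proposition~\ref{prop10}, $E(X)=w_1c_1+w_2c_2$, and Lemma~\ref{lemma333}), and it is a genuinely different formulation from the paper's, which instead argues by median symmetry that the two optimal points lie on a median, deduces $p=\frac12$ from the centroid condition, reads off which basic triangles fall in which Voronoi region, and computes the error directly.

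The gap is the step you yourself flag as ``the technical heart'': the uniform bound $w_1w_2\|c_1-c_2\|^2\le\frac2{27}$ over \emph{all} half-plane splits is precisely the optimality assertion, and it is never proved. The $D_3$ reduction does not finitize the problem: separating lines form a two-parameter family (direction and offset), and quotienting by a finite group leaves a family of the same dimension. The claim that ``any line cutting through one of the basic triangles is beaten by one that leaves all three intact'' is exactly the statement requiring a quantitative estimate, because $w_1w_2$ and $\|c_1-c_2\|^2$ trade off against each other, and for a singular self-similar measure the centroid of a severed piece is not elementary --- your own evaluation of the split $x_1=\frac12$ already needed a self-similarity argument. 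Checking that one representative line gives $\frac1{16}<\frac2{27}$ does not substitute for a supremum over all lines. As written, the proposal proves $V_2\le\frac5{54}$ together with a correct plan, but not a proof, of $V_2\ge\frac5{54}$. To close it you must either carry out the uniform estimate (e.g., restrict the direction of the splitting line to a $60^\circ$ sector by symmetry, then bound the centroid displacement and mass of any severed sub-triangle using the self-similar structure), or follow the paper's route: use the median symmetry to force the two optimal points onto a median, and then apply Proposition~\ref{prop10}(iii) to identify them with the conditional centroids $a(3)$ and $a(1,2)$, which yields the error $\frac5{54}$ by the computation you have already done.
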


\begin{proof}
The stretched Sierpi\`nski triangle with respect to any of its medians has the maximum symmetry, i.e., with respect to any of its medians the stretched Sierpi\`nski triangle is geometrically symmetric as well as symmetric with respect to the probability distribution $P$.  Due to this fact, among all the pairs of two points which have the boundaries of the Voronoi regions oblique lines passing through the centroid $(\frac{1}{2},\frac{\sqrt{3}}{6})$, the two points which have the boundary of the Voronoi regions the line perpendicular to a median will give the smallest distortion error. Without any loss of generality, to get an optimal set of two-means we consider the median passing through the vertex $(\frac 12, \frac {\sqrt{3}}2)$. Let $\ga:=\{(p, b_1), (p, b_2)\}$ be an optimal set of two-means with $b_1\leq b_2$. Since the optimal quantizers are the centroids of their own Voronoi regions, by the properties of centroids, we have
\[(p, b_1) P(M((p, b_1)|\ga))+(p, b_2) P(M((p, b_2)|\ga))=(\frac 12, \frac {\sqrt{3}} 6),\]
which implies $p=\frac 12$ and $b_1 P(M((p, b_1)|\ga))+b_2 P(M((p, b_2)|\ga))=\frac {\sqrt{3}} 6$. Thus, it follows that the two optimal quantizers are $(\frac 12, b_1)$ and $(\frac 12, b_2)$, and they lie in the opposite sides of the point $(\frac 12, \frac {\sqrt{3}} 6)$. This yields the fact that $\tri_1\uu \tri_2\sci M((\frac 12, b_1)|\ga)$ and $\tri_3 \sci M((\frac 12, b_2)|\ga)$. Again, the optimal quantizers are the centroids of their own Voronoi regions, and so by equation~\eqref{eq0}, we have
\begin{align*}
&(\frac 12, b_1)=E(X : X\in \tri_1\uu \tri_2)= \frac 1{\frac 1 3+\frac 13}\Big(\frac 13 a(1)+\frac 13 a(2)\Big)=(\frac{1}{2},\frac{1}{6 \sqrt{3}}),\\
&(\frac 12, b_2)=E(X : X\in  \tri_3)=a(3)=(\frac{1}{2},\frac{7}{6 \sqrt{3}}),
\end{align*}
and then the quantization error is
\begin{align*}
&V_2=\mathop{\int}\limits_{\tri_1\uu \tri_2}\min_{a\in \ga}\|x-a\|^2 dP+\mathop{\int}\limits_{\tri_3}\min_{a\in \ga}\|x-a\|^2 dP\\
&=\mathop{\int}\limits_{\tri_1}\|x-(\frac{1}{2},\frac{1}{6 \sqrt{3}})\|^2 dP+\mathop{\int}\limits_{\tri_2}\|x-(\frac{1}{2},\frac{1}{6 \sqrt{3}})\|^2 dP+\mathop{\int}\limits_{\tri_3}\|x-(\frac{1}{2},\frac{7}{6 \sqrt{3}})\|^2 dP\\
&=\frac{5}{54}=0.0925926.
\end{align*}
Hence, the proof of the proposition is complete.
\end{proof}

\begin{remark} Due to symmetry, the sets $\set{(\frac{1}{6},\frac{1}{6 \sqrt{3}}), (\frac{2}{3},\frac{2}{3 \sqrt{3}})}$, and $\set{(\frac{5}{6}, \frac{1}{6 \sqrt{3}}), (\frac{1}{3},  \frac{2}{3 \sqrt{3}})}$ also form optimal sets of two-means with quantization error $V_2=\frac 5{54}$  (see Figure~\ref{Fig}).
\end{remark}

\begin{figure}
\begin{tikzpicture}[line cap=round,line join=round,>=triangle 45,x=0.3 cm,y=0.3 cm]
\clip(-0.717946985327632,-0.81755092664275) rectangle (12.8592934709513,11.586582306908248);
\draw (0.,0.)-- (6.,10.392304845413264);
\draw (6.,10.392304845413264)-- (12.,0.);
\draw (0.,0.)-- (12.,0.);
\draw (2.,3.4641016151377553)-- (4.,0.);
\draw (10.,3.4641016151377553)-- (8.,0.);
\draw (4.,6.9282032302755105)-- (8.,6.9282032302755105);
\draw (0.6666666666666666,1.1547005383792515)-- (1.3333333333333333,0.);
\draw (3.3333333333333335,1.1547005383792515)-- (2.6666666666666665,0.);
\draw (1.3333333333333333,2.309401076758503)-- (2.6666666666666665,2.309401076758503);
\draw (8.666666666666666,1.1547005383792515)-- (9.333333333333334,0.);
\draw (11.333333333333332,1.1547005383792515)-- (10.666666666666666,0.);
\draw (9.333333333333334,2.309401076758503)-- (10.666666666666666,2.309401076758503);
\draw (4.666666666666667,8.082903768654761)-- (5.333333333333333,6.9282032302755105);
\draw (7.333333333333334,8.082903768654761)-- (6.666666666666667,6.9282032302755105);
\draw (5.333333333333333,9.237604307034012)-- (6.666666666666667,9.237604307034012);
\begin{scriptsize}
\draw [fill=ffqqqq] (6.,3.4641016151377544) circle (2.5pt);
\end{scriptsize}
\end{tikzpicture}
 \begin{tikzpicture}[line cap=round,line join=round,>=triangle 45,x=0.3 cm,y=0.3 cm]
\clip(-0.717946985327634,-0.8175509266427525) rectangle (12.859293470951293,11.58658230690825);
\draw (0.,0.)-- (6.,10.392304845413264);
\draw (6.,10.392304845413264)-- (12.,0.);
\draw (0.,0.)-- (12.,0.);
\draw (2.,3.4641016151377553)-- (4.,0.);
\draw (10.,3.4641016151377553)-- (8.,0.);
\draw (4.,6.9282032302755105)-- (8.,6.9282032302755105);
\draw (0.6666666666666666,1.1547005383792515)-- (1.3333333333333333,0.);
\draw (3.3333333333333335,1.1547005383792515)-- (2.6666666666666665,0.);
\draw (1.3333333333333333,2.309401076758503)-- (2.6666666666666665,2.309401076758503);
\draw (8.666666666666666,1.1547005383792515)-- (9.333333333333334,0.);
\draw (11.333333333333332,1.1547005383792515)-- (10.666666666666666,0.);
\draw (9.333333333333334,2.309401076758503)-- (10.666666666666666,2.309401076758503);
\draw (4.666666666666667,8.082903768654761)-- (5.333333333333333,6.9282032302755105);
\draw (7.333333333333334,8.082903768654761)-- (6.666666666666667,6.9282032302755105);
\draw (5.333333333333333,9.237604307034012)-- (6.666666666666667,9.237604307034012);
\begin{scriptsize}
\draw [fill=ffqqqq] (6.,1.1547005383792515) circle (2.5pt);
\draw [fill=ffqqqq] (6.,8.082903768654761) circle (2.5pt);
\end{scriptsize}
\end{tikzpicture}
\begin{tikzpicture}[line cap=round,line join=round,>=triangle 45,x=0.3 cm,y=0.3 cm]
\clip(-0.717946985327634,-0.8175509266427525) rectangle (12.859293470951293,11.58658230690825);
\draw (0.,0.)-- (6.,10.392304845413264);
\draw (6.,10.392304845413264)-- (12.,0.);
\draw (0.,0.)-- (12.,0.);
\draw (2.,3.4641016151377553)-- (4.,0.);
\draw (10.,3.4641016151377553)-- (8.,0.);
\draw (4.,6.9282032302755105)-- (8.,6.9282032302755105);
\draw (0.6666666666666666,1.1547005383792515)-- (1.3333333333333333,0.);
\draw (3.3333333333333335,1.1547005383792515)-- (2.6666666666666665,0.);
\draw (1.3333333333333333,2.309401076758503)-- (2.6666666666666665,2.309401076758503);
\draw (8.666666666666666,1.1547005383792515)-- (9.333333333333334,0.);
\draw (11.333333333333332,1.1547005383792515)-- (10.666666666666666,0.);
\draw (9.333333333333334,2.309401076758503)-- (10.666666666666666,2.309401076758503);
\draw (4.666666666666667,8.082903768654761)-- (5.333333333333333,6.9282032302755105);
\draw (7.333333333333334,8.082903768654761)-- (6.666666666666667,6.9282032302755105);
\draw (5.333333333333333,9.237604307034012)-- (6.666666666666667,9.237604307034012);
\begin{scriptsize}
\draw [fill=ffqqqq] (6.,8.082903768654761) circle (2.5pt);
\draw [fill=ffqqqq] (2.,1.1547005383792515) circle (2.5pt);
\draw [fill=ffqqqq] (10.,1.1547005383792515) circle (2.5pt);
\end{scriptsize}
\end{tikzpicture}

\vspace{0.1 in}
\begin{tikzpicture}[line cap=round,line join=round,>=triangle 45, x=0.3 cm,y=0.3 cm]
\clip(-0.717946985327634,-0.8175509266427525) rectangle (12.859293470951293,11.58658230690825);
\draw (0.,0.)-- (6.,10.392304845413264);
\draw (6.,10.392304845413264)-- (12.,0.);
\draw (0.,0.)-- (12.,0.);
\draw (2.,3.4641016151377553)-- (4.,0.);
\draw (10.,3.4641016151377553)-- (8.,0.);
\draw (4.,6.9282032302755105)-- (8.,6.9282032302755105);
\draw (0.6666666666666666,1.1547005383792515)-- (1.3333333333333333,0.);
\draw (3.3333333333333335,1.1547005383792515)-- (2.6666666666666665,0.);
\draw (1.3333333333333333,2.309401076758503)-- (2.6666666666666665,2.309401076758503);
\draw (8.666666666666666,1.1547005383792515)-- (9.333333333333334,0.);
\draw (11.333333333333332,1.1547005383792515)-- (10.666666666666666,0.);
\draw (9.333333333333334,2.309401076758503)-- (10.666666666666666,2.309401076758503);
\draw (4.666666666666667,8.082903768654761)-- (5.333333333333333,6.9282032302755105);
\draw (7.333333333333334,8.082903768654761)-- (6.666666666666667,6.9282032302755105);
\draw (5.333333333333333,9.237604307034012)-- (6.666666666666667,9.237604307034012);
\begin{scriptsize}
\draw [fill=ffqqqq] (6.,8.082903768654761) circle (2.5pt);
\draw [fill=ffqqqq] (2.000000000, 2.694301256) circle (2.5pt);
\draw [fill=ffqqqq] (2.000000000, 0.3849001795) circle (2.5pt);
\draw [fill=ffqqqq] (10.,1.1547005383792515) circle (2.5pt);
\end{scriptsize}
\end{tikzpicture}
\begin{tikzpicture}[line cap=round,line join=round,>=triangle 45,x=0.3 cm,y=0.3 cm]
\clip(-0.717946985327634,-0.8175509266427525) rectangle (12.859293470951293,11.58658230690825);
\draw (0.,0.)-- (6.,10.392304845413264);
\draw (6.,10.392304845413264)-- (12.,0.);
\draw (0.,0.)-- (12.,0.);
\draw (2.,3.4641016151377553)-- (4.,0.);
\draw (10.,3.4641016151377553)-- (8.,0.);
\draw (4.,6.9282032302755105)-- (8.,6.9282032302755105);
\draw (0.6666666666666666,1.1547005383792515)-- (1.3333333333333333,0.);
\draw (3.3333333333333335,1.1547005383792515)-- (2.6666666666666665,0.);
\draw (1.3333333333333333,2.309401076758503)-- (2.6666666666666665,2.309401076758503);
\draw (8.666666666666666,1.1547005383792515)-- (9.333333333333334,0.);
\draw (11.333333333333332,1.1547005383792515)-- (10.666666666666666,0.);
\draw (9.333333333333334,2.309401076758503)-- (10.666666666666666,2.309401076758503);
\draw (4.666666666666667,8.082903768654761)-- (5.333333333333333,6.9282032302755105);
\draw (7.333333333333334,8.082903768654761)-- (6.666666666666667,6.9282032302755105);
\draw (5.333333333333333,9.237604307034012)-- (6.666666666666667,9.237604307034012);
\begin{scriptsize}
\draw [fill=ffqqqq] (6.,8.082903768654761) circle (2.5pt);
\draw [fill=ffqqqq] (2.000000000, 2.694301256) circle (2.5pt);
\draw [fill=ffqqqq] (2.000000000, 0.3849001795) circle (2.5pt);
\draw [fill=ffqqqq] (10.00000000, 2.694301256) circle (2.5pt);
\draw [fill=ffqqqq] (10.00000000, 0.3849001795) circle (2.5pt);
\end{scriptsize}
\end{tikzpicture}  \begin{tikzpicture}[line cap=round,line join=round,>=triangle 45,x=0.3 cm,y=0.3 cm]
\clip(-0.717946985327634,-0.8175509266427525) rectangle (12.859293470951293,11.58658230690825);
\draw (0.,0.)-- (6.,10.392304845413264);
\draw (6.,10.392304845413264)-- (12.,0.);
\draw (0.,0.)-- (12.,0.);
\draw (2.,3.4641016151377553)-- (4.,0.);
\draw (10.,3.4641016151377553)-- (8.,0.);
\draw (4.,6.9282032302755105)-- (8.,6.9282032302755105);
\draw (0.6666666666666666,1.1547005383792515)-- (1.3333333333333333,0.);
\draw (3.3333333333333335,1.1547005383792515)-- (2.6666666666666665,0.);
\draw (1.3333333333333333,2.309401076758503)-- (2.6666666666666665,2.309401076758503);
\draw (8.666666666666666,1.1547005383792515)-- (9.333333333333334,0.);
\draw (11.333333333333332,1.1547005383792515)-- (10.666666666666666,0.);
\draw (9.333333333333334,2.309401076758503)-- (10.666666666666666,2.309401076758503);
\draw (4.666666666666667,8.082903768654761)-- (5.333333333333333,6.9282032302755105);
\draw (7.333333333333334,8.082903768654761)-- (6.666666666666667,6.9282032302755105);
\draw (5.333333333333333,9.237604307034012)-- (6.666666666666667,9.237604307034012);
\begin{scriptsize}
\draw [fill=ffqqqq] (6.000000000, 9.622504486) circle (2.5pt);
\draw [fill=ffqqqq] (6.000000000, 7.313103410) circle (2.5pt);
\draw [fill=ffqqqq] (2.000000000, 2.694301256) circle (2.5pt);
\draw [fill=ffqqqq] (2.000000000, 0.3849001795) circle (2.5pt);
\draw [fill=ffqqqq] (10.00000000, 2.694301256) circle (2.5pt);
\draw [fill=ffqqqq] (10.00000000, 0.3849001795) circle (2.5pt);
\end{scriptsize}
\end{tikzpicture}

\vspace{0.1 in}
\begin{tikzpicture}[line cap=round,line join=round,>=triangle 45, x=0.3 cm,y=0.3 cm]
\clip(-0.717946985327634,-0.8175509266427525) rectangle (12.859293470951293,11.58658230690825);
\draw (0.,0.)-- (6.,10.392304845413264);
\draw (6.,10.392304845413264)-- (12.,0.);
\draw (0.,0.)-- (12.,0.);
\draw (2.,3.4641016151377553)-- (4.,0.);
\draw (10.,3.4641016151377553)-- (8.,0.);
\draw (4.,6.9282032302755105)-- (8.,6.9282032302755105);
\draw (0.6666666666666666,1.1547005383792515)-- (1.3333333333333333,0.);
\draw (3.3333333333333335,1.1547005383792515)-- (2.6666666666666665,0.);
\draw (1.3333333333333333,2.309401076758503)-- (2.6666666666666665,2.309401076758503);
\draw (8.666666666666666,1.1547005383792515)-- (9.333333333333334,0.);
\draw (11.333333333333332,1.1547005383792515)-- (10.666666666666666,0.);
\draw (9.333333333333334,2.309401076758503)-- (10.666666666666666,2.309401076758503);
\draw (4.666666666666667,8.082903768654761)-- (5.333333333333333,6.9282032302755105);
\draw (7.333333333333334,8.082903768654761)-- (6.666666666666667,6.9282032302755105);
\draw (5.333333333333333,9.237604307034012)-- (6.666666666666667,9.237604307034012);
\begin{scriptsize}
\draw [fill=ffqqqq] (6.000000000, 9.622504486) circle (2.5pt);
\draw [fill=ffqqqq] (6.000000000, 7.313103410) circle (2.5pt);
\draw [fill=ffqqqq] (2.000000000, 2.694301256) circle (2.5pt);
\draw [fill=ffqqqq] (0.6666666667, 0.3849001795) circle (2.5pt);
\draw [fill=ffqqqq] (3.333333333, 0.3849001795) circle (2.5pt);
\draw [fill=ffqqqq] (10.00000000, 2.694301256) circle (2.5pt);
\draw [fill=ffqqqq] (10.00000000, 0.3849001795) circle (2.5pt);
\end{scriptsize}
\end{tikzpicture}
\begin{tikzpicture}[line cap=round,line join=round,>=triangle 45,x=0.3 cm,y=0.3 cm]
\clip(-0.717946985327634,-0.8175509266427525) rectangle (12.859293470951293,11.58658230690825);
\draw (0.,0.)-- (6.,10.392304845413264);
\draw (6.,10.392304845413264)-- (12.,0.);
\draw (0.,0.)-- (12.,0.);
\draw (2.,3.4641016151377553)-- (4.,0.);
\draw (10.,3.4641016151377553)-- (8.,0.);
\draw (4.,6.9282032302755105)-- (8.,6.9282032302755105);
\draw (0.6666666666666666,1.1547005383792515)-- (1.3333333333333333,0.);
\draw (3.3333333333333335,1.1547005383792515)-- (2.6666666666666665,0.);
\draw (1.3333333333333333,2.309401076758503)-- (2.6666666666666665,2.309401076758503);
\draw (8.666666666666666,1.1547005383792515)-- (9.333333333333334,0.);
\draw (11.333333333333332,1.1547005383792515)-- (10.666666666666666,0.);
\draw (9.333333333333334,2.309401076758503)-- (10.666666666666666,2.309401076758503);
\draw (4.666666666666667,8.082903768654761)-- (5.333333333333333,6.9282032302755105);
\draw (7.333333333333334,8.082903768654761)-- (6.666666666666667,6.9282032302755105);
\draw (5.333333333333333,9.237604307034012)-- (6.666666666666667,9.237604307034012);
\begin{scriptsize}
\draw [fill=ffqqqq] (6.000000000, 9.622504486) circle (2.5pt);
\draw [fill=ffqqqq] (6.000000000, 7.313103410) circle (2.5pt);
\draw [fill=ffqqqq] (2.000000000, 2.694301256) circle (2.5pt);
\draw [fill=ffqqqq] (0.6666666667, 0.3849001795) circle (2.5pt);
\draw [fill=ffqqqq] (3.333333333, 0.3849001795) circle (2.5pt);
\draw [fill=ffqqqq] (10.00000000, 2.694301256) circle (2.5pt);
\draw [fill=ffqqqq] (8.666666667, 0.3849001795) circle (2.5pt);
\draw [fill=ffqqqq] (11.33333333, 0.3849001795) circle (2.5pt);
\end{scriptsize}
\end{tikzpicture}  \begin{tikzpicture}[line cap=round,line join=round,>=triangle 45,x=0.3 cm,y=0.3 cm]
\clip(-0.717946985327634,-0.8175509266427525) rectangle (12.859293470951293,11.58658230690825);
\draw (0.,0.)-- (6.,10.392304845413264);
\draw (6.,10.392304845413264)-- (12.,0.);
\draw (0.,0.)-- (12.,0.);
\draw (2.,3.4641016151377553)-- (4.,0.);
\draw (10.,3.4641016151377553)-- (8.,0.);
\draw (4.,6.9282032302755105)-- (8.,6.9282032302755105);
\draw (0.6666666666666666,1.1547005383792515)-- (1.3333333333333333,0.);
\draw (3.3333333333333335,1.1547005383792515)-- (2.6666666666666665,0.);
\draw (1.3333333333333333,2.309401076758503)-- (2.6666666666666665,2.309401076758503);
\draw (8.666666666666666,1.1547005383792515)-- (9.333333333333334,0.);
\draw (11.333333333333332,1.1547005383792515)-- (10.666666666666666,0.);
\draw (9.333333333333334,2.309401076758503)-- (10.666666666666666,2.309401076758503);
\draw (4.666666666666667,8.082903768654761)-- (5.333333333333333,6.9282032302755105);
\draw (7.333333333333334,8.082903768654761)-- (6.666666666666667,6.9282032302755105);
\draw (5.333333333333333,9.237604307034012)-- (6.666666666666667,9.237604307034012);
\begin{scriptsize}
\draw [fill=ffqqqq] (6.000000000, 9.622504486) circle (2.5pt);
\draw [fill=ffqqqq] (4.666666667, 7.313103410) circle (2.5pt);
\draw [fill=ffqqqq] (7.333333333, 7.313103410) circle (2.5pt);
\draw [fill=ffqqqq] (2.000000000, 2.694301256) circle (2.5pt);
\draw [fill=ffqqqq] (0.6666666667, 0.3849001795) circle (2.5pt);
\draw [fill=ffqqqq] (3.333333333, 0.3849001795) circle (2.5pt);
\draw [fill=ffqqqq] (10.00000000, 2.694301256) circle (2.5pt);
\draw [fill=ffqqqq] (8.666666667, 0.3849001795) circle (2.5pt);
\draw [fill=ffqqqq] (11.33333333, 0.3849001795) circle (2.5pt);
\end{scriptsize}
\end{tikzpicture}
\caption{Optimal configuration of $n$ points for $1\leq n\leq 9$ on the stretched Sierpi\`nski triangle.} \label{Fig}
\end{figure}
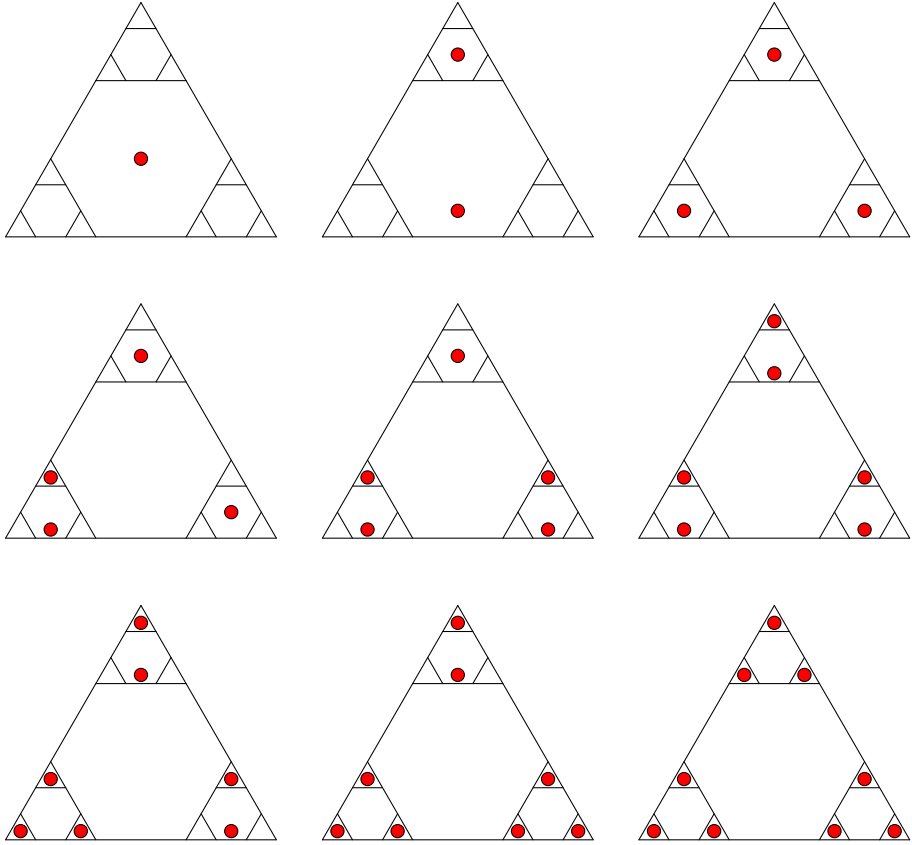

\begin{lemma} \label{lemma30}
Let $\ga$ be an optimal set of $n$-means with $n\geq 3$. Then, $\ga\ii \tri_i\neq \es$ for all $1\leq i\leq 3$.
\end{lemma}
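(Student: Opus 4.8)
The plan is to argue by contradiction, pairing an upper bound for $V_n$ with a geometric lower bound that is triggered whenever one basic triangle is starved of points. First I would record the upper bound $V_n\le V_3\le\frac1{54}$: applying \eqref{eq1} with $(a,b)=a(\go)$ and $k=1$ gives $\int_{\tri_j}\|x-a(j)\|^2\,dP=\frac13\cdot\frac19 V=\frac1{162}$ for each $j$, and since the centroids $a(1),a(2),a(3)$ form an equilateral triangle about $E(X)$ their Voronoi regions are exactly $\tri_1,\tri_2,\tri_3$, so the three-point set $\set{a(1),a(2),a(3)}$ has distortion $\frac3{162}=\frac1{54}$; as $V_n$ is nonincreasing in $n$, this yields $V_n\le\frac1{54}$ for all $n\ge3$. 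By the symmetry of $P$ the three basic triangles play interchangeable roles, so it suffices to rule out $\tri_3\ii\ga=\es$.

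Next I would pin down the geometry. Every optimal point is the centroid of its Voronoi region intersected with the support (Proposition~\ref{prop10}$(iii)$), hence lies in the convex hull $\tri$ of $R$. The decisive observation is that the two slanted sides of $\tri_3$ lie on $\partial\tri$ (on the two lines bounding $\tri$), so that $\set{y>\tfrac{\sqrt3}3}\ii\tri\ci\tri_3$; consequently every $a\in\ga$, lying in $\tri$ but not in $\tri_3$, satisfies $y_a\le\frac{\sqrt3}3$. Two facts follow. $(i)$ For $x\in\tri_3$ and any $a\in\ga$ we have $\|x-a\|\ge y_x-\frac{\sqrt3}3$, so, writing $x=S_3(z)$ and using Lemma~\ref{lemma333},
\[\int_{\tri_3}\min_{a\in\ga}\|x-a\|^2\,dP\ge\int_{\tri_3}\Big(y_x-\tfrac{\sqrt3}3\Big)^2\,dP=\tfrac13\cdot\tfrac19\,E(X_2^2)=\tfrac1{162}.\]
$(ii)$ If some $a\in\ga$ had $P(M(a|\ga)\ii\tri_3)>0$ while its region met the support only inside $\tri_3$, then its centroid would have height $\ge\frac{\sqrt3}3$, i.e. $a\in\tri_3$, a contradiction; hence every point serving $\tri_3$ also captures positive mass from $\tri_1\uu\tri_2$.

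The quantitative contradiction is then obtained by charging this ``shared'' structure. For a point $a$ serving $\tri_3$ set $A_a=M(a|\ga)\ii\tri_3$ and $B_a=M(a|\ga)\ii(\tri_1\uu\tri_2)$, with masses $m_a^3,m_a^{12}>0$ and conditional heights $\mu_a^3\ge\frac{\sqrt3}3$, $\mu_a^{12}\le\frac{\sqrt3}6$. The identity $\int_{A_a}(y-y_a)^2\,dP=\int_{A_a}(y-\frac{\sqrt3}3)^2\,dP+m_a^3(\frac{\sqrt3}3-y_a)^2+2(\frac{\sqrt3}3-y_a)\,m_a^3(\mu_a^3-\frac{\sqrt3}3)$ peels off the bound of part $(i)$ from a nonnegative surplus, while $\int_{B_a}(y-y_a)^2\,dP\ge m_a^{12}(y_a-\mu_a^{12})^2$ measures the cost of reaching down to $\tri_1\uu\tri_2$; the centroid balance $m_a^3(\mu_a^3-\frac{\sqrt3}3)\le m_a^{12}(\frac{\sqrt3}3-\mu_a^{12})$ links these surpluses across all serving points. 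Summing over $\ga$, the part-$(i)$ terms reassemble to $\frac1{162}$, and one then shows, using the height gap $\frac{\sqrt3}3-\frac{\sqrt3}6=\frac{\sqrt3}6$, that the surplus terms contribute more than $\frac1{54}-\frac1{162}=\frac1{81}$, forcing $V_n>\frac1{54}\ge V_n$.

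I expect this last step to be the main obstacle: the mass each center draws from $\tri_3$ may be small and distributed over many centers, so the surplus cannot be bounded center-by-center but only after summation together with the centroid-balance inequality. Getting the accumulated surplus to strictly beat $\frac1{54}$, rather than merely to be positive, is where the careful bookkeeping lies; the symmetry reduction made at the outset then settles the cases $i=1,2$ simultaneously.
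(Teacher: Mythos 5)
Your setup is sound and partly parallels the paper's: the upper bound $V_n\le V_3\le \frac1{54}$, the symmetry reduction to the single case $\ga\ii\tri_3=\es$, the observation that $\tri\ii\set{x_2>\frac{\sqrt3}{3}}\ci\tri_3$ (so every point of $\ga$ sits at height $\le\frac{\sqrt3}{3}$), and the fact that any center whose Voronoi region meets $\tri_3$ in positive mass must also capture mass from $\tri_1\uu\tri_2$ are all correct. But the proof is not complete, and the missing step is not mere bookkeeping: the inequality you defer to (``the surplus terms contribute more than $\frac1{54}-\frac1{162}=\frac1{81}$'') is actually \emph{false} within the relaxation you have set up, so no amount of careful summation of the quantities you retain can finish the argument.

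To see this concretely, keep only the data your accounting uses: for each serving center $a$, write $t_a=\frac{\sqrt3}{3}-y_a\ge 0$, $s_a=\int_{A_a}(x_2-\frac{\sqrt3}{3})\,dP\ge 0$, $u_a=y_a-\mu_a^{12}$, with the vertical centroid balance $m_a^{12}u_a=s_a+m_a^3t_a$ and the height constraint $u_a\ge\frac{\sqrt3}{6}-t_a$. The surplus of center $a$ is $2t_as_a+m_a^3t_a^2+m_a^{12}u_a^2=2t_as_a+m_a^3t_a^2+(s_a+m_a^3t_a)u_a$, and minimizing over admissible $t_a,u_a$ gives the value $\frac{\sqrt3}{6}s_a$, attained in the limit $t_a\to 0$, $u_a\to\frac{\sqrt3}{6}$. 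Since the sets $A_a$ partition $\tri_3$, we have $\sum_a s_a=\int_{\tri_3}(x_2-\frac{\sqrt3}{3})\,dP=\frac19 E(X_2)=\frac{\sqrt3}{54}$, so the total surplus your constraints can force is only $\frac{\sqrt3}{6}\cdot\frac{\sqrt3}{54}=\frac1{108}$. The best certified lower bound is therefore $\frac1{162}+\frac1{108}=\frac5{324}$, strictly below $\frac1{54}=\frac6{324}$: centers hovering just under the base of $\tri_3$, balanced by thin layers of mass at height $\approx\frac{\sqrt3}{6}$ directly beneath them, satisfy every inequality you use and produce no contradiction. What excludes such configurations in reality is horizontal information that the projection onto the $x_2$-axis discards: the support of $P$ contains no points directly below the base of $\tri_3$ (that is the deleted middle region), so the balancing mass necessarily lies in $\tri_1\uu\tri_2$, at $x_1\le\frac13$ or $x_1\ge\frac23$, hence genuinely farther from the serving centers than $\frac{\sqrt3}{6}$. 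The paper's proof imports exactly this two-dimensional data through its case analysis with the line $x_2=\frac{2\sqrt3}{9}$, explicit test points such as $(\frac29,\frac{2\sqrt3}{9})$, $(\frac14,\frac1{4\sqrt3})$, $(\frac12,0)$, and level-two triangles like $\tri_{11},\tri_{12},\tri_{21},\tri_{22}$. To salvage your approach you would need a lower bound on $\int_{B_a}\|x-a\|^2\,dP$ that reflects this horizontal displacement, not just the vertical term $m_a^{12}u_a^2$.
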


\begin{proof} Let us consider an arbitrary three-point set $\gb$ given by $\gb=\set{a(1), a(2), a(3)}$. Then, the distortion error is
\begin{align*}
\int\min_{a\in \ga} \|x-a\|^2 dP=\mathop{\sum}_{i=1}^3 \mathop{\int}\limits_{\tri_i}\|x-a(i)\|^2 dP=3 \cdot\frac{1}{162}=\frac 1{54}=0.0185185.
\end{align*}
Since, $V_n$ is the quantization error for $n\geq 3$, we have $0.0185185\geq V_3\geq V_n$. Let $\ga$ be an optimal set of $n$-means for $n\geq 3$.
As the optimal quantizers are the centroids of their own Voronoi regions, we have $\ga \sci \tri$.

Suppose that $\ga$ does not contain any point from $\mathop{\uu}\limits_{i=1}^3\tri_i$.
If all the points of $\ga$ are below the line $x_2=\frac {2\sqrt{3}}{9}$, for any $(x_1, x_2) \in \tri_3$ we have $\min_{(a, b)\in \ga}\|(x_1, x_2)-(a, b)\|^2\geq (\frac{\sqrt{3}}{3}-\frac{2\sqrt{3}}{9})^2=\frac1{27},$ and for any $(x_1, x_2) \in \tri_{11}\uu\tri_{22}$ we have $\min_{(a, b)\in \ga}\|(x_1, x_2)-(a, b)\|^2\geq \frac1{27}$, and then the distortion error is obtained as
\begin{align*}
&\int\min_{a\in \ga}\|x-a\|^2 dP>\mathop{\int}\limits_{\tri_3}\min_{a\in \ga}\|x-a\|^2 dP +\mathop{\int}\limits_{\tri_{11}\uu\tri_{22}} \min_{a\in \ga}\|x-a\|^2 dP\\
&\geq \frac 13 \cdot\frac{1}{27}+2 \cdot\frac 19 \cdot\frac{1}{27} =\frac{5}{243}=0.0205761>V_3,
\end{align*}
which is a contradiction. If $\ga$ does not contain any point below the line  $x_2=\frac {2\sqrt{3}}{9}$, for any $(x_1, x_2) \in \tri_{11}\uu\tri_{12}\uu\tri_{21}\uu\tri_{22}$ we have $\min_{a\in \ga}\|(x_1, x_2)-a\|^2\geq \frac1{12}$, and then the distortion error is obtained as
\begin{align*}
&\int\min_{a\in \ga}\|x-a\|^2 dP>\mathop{\int}\limits_{\mathop{\uu}\limits_{i, j=1}^2\tri_{ij}}\min_{a\in \ga}\|x-a\|^2 dP
\geq 4\cdot \frac 19\cdot \frac 1{12} =\frac{1}{27}=0.037037>V_3,
\end{align*}
 which is a contradiction as well. Thus, we conclude that $\ga$ contains points both above and below the line  $x_2=\frac {2\sqrt{3}}{9}$. If $\ga$ contains two or more points below the line $x_2=\frac {2\sqrt{3}}{9}$, then the quantization error can be strictly reduced by moving points below the line $x_2=\frac {2\sqrt{3}}{9}$ to $\tri_1$ and $\tri_2$, and by moving the points above the line $x_2=\frac {2\sqrt{3}}{9}$ to $\tri_3$, and so, we assume that $\ga$ contains only one point below the line $x_2=\frac {2\sqrt{3}}{9}$. Due to symmetry we can assume that this point lies on the line $x_1=\frac 12$.
Then, notice that $a(12, 21)=(\frac{1}{2},\frac{1}{18 \sqrt{3}})$ and it is the midpoint of the line segment joining the centroids of $\tri_{12}$ and $\tri_{21}$; the point of intersection of the lines $x_2=\sqrt{3} x_1$ and $x_2=\frac{2 \sqrt{3}}{9}$ is $(\frac{2}{9}, \frac{2 \sqrt{3}}{9})$, and the base of the perpendicular passing through $(0, 0)$ of the triangle $\tri_1$ is $(\frac{1}{4},\frac{1}{4 \sqrt{3}})$. Hence, we obtain
\begin{align} \label{eq451}
&\int\min_{a\in \ga}\|x-a\|^2 dP\\
&\geq  \mathop{\int}\limits_{\tri_{12}\uu \tri_{21}} \min_{a\in \ga}\|x-a\|^2 dP+\mathop{\int}\limits_{\tri_{13}\uu \tri_{23}} \min_{a\in \ga}\|x-a\|^2 dP+\mathop{\int}\limits_{\tri_{11}\uu \tri_{22}} \min_{a\in \ga}\|x-a\|^2 dP\notag\\
&\geq 2\mathop{\int}\limits_{\tri_{12}} \|x-(\frac{1}{2},\frac{1}{18 \sqrt{3}})\|^2 dP+2 \mathop{\int}\limits_{\tri_{13}}\|x-(\frac{2}{9}, \frac{2 \sqrt{3}}{9})\|^2 dP+2 \mathop{\int}\limits_{\tri_{11}} \|x-(\frac{1}{4},\frac{1}{4 \sqrt{3}})\|^2 dP\notag\\
&=\frac{25}{2187}+\frac{5}{729}+\frac{17}{1458}=\frac{131}{4374}=0.0299497>V_3,\notag
\end{align}
which is a contradiction. Thus, we arrive at a contradiction under the assumption that $\ga$ does not contain any point from $\tri_1\uu \tri_2\uu\tri_3$.
Hence, $\ga$ contains at least one point from $\mathop{\uu}\limits_{i=1}^3\tri_i$.
Due to symmetry without any loss of generality we can assume that $\ga$ contains at least one point from $\tri_3$ and does not contain any point from $\tri_1\uu \tri_2$. Then, notice that Voronoi region of any point of $\ga$ which are below the line $x_2=\frac {2\sqrt{3}}{9}$ does not contain any point from $\tri_3$; if it does then the quantization error can be strictly reduced by relocating the points, and it will contradict the fact that $\ga$ is an optimal set.  Hence, if $\ga$ contains two or more points below the line $x_2=\frac {2\sqrt{3}}{9}$, quantization error can be strictly reduced by moving points to $\tri_1$ and to $\tri_2$. So, we assume that $\ga$ contains only one point below the line  $x_2=\frac {2\sqrt{3}}{9}$.
Then as shown in \eqref{eq451}, we have
the distortion error as
\begin{align*}
&\int\min_{a\in \ga}\|x-a\|^2 dP\geq  \frac{131}{4374}=0.0299497>V_3,
\end{align*}
which is a contradiction. Thus, we conclude that $\ga$ does not contain any point from $\tri$ below the line $x_2=\frac {2\sqrt{3}}{9}$. But, then,
\begin{align*}
&\int\min_{a\in \ga}\|x-a\|^2 dP\geq  \mathop{\int}\limits_{\tri_{1}\uu \tri_{2}} \min_{a\in \ga}\|x-a\|^2 dP\\
& \geq 2\Big(\mathop{\int}\limits_{\tri_{11}\uu\tri_{13}}\|x-(\frac{2}{9}, \frac{2 \sqrt{3}}{9})\|^2 dP+\mathop{\int}\limits_{\tri_{12}}\|x-(\frac{5}{18}, \frac{2 \sqrt{3}}{9})\|^2 dP\Big)=\frac{101}{1458}=0.069273,
\end{align*}
which is larger than $V_3$, and so another contradiction arises. All these contradictions arise due to our assumption that $\ga$ contains at least one point from $\tri_3$, and does not contain any point from $\tri_1\uu\tri_2$. We now assume that $\ga$ contains points from any two of the basic triangles $\tri_1$, $\tri_2$ and $\tri_3$. Due to symmetry, without any loss of generality, we can now assume that $\ga$ contains points from $\tri_1$ and $\tri_2$, but does not contain any point from $\tri_3$. In this situation, suppose that $\ga$ does not contain any point above the line  $x_2=\frac {2\sqrt{3}}{9}$.
Then, for any $(x_1, x_2) \in \tri_{31}\uu\tri_{32}$, we have $\min_{(a, b)\in \ga}\|(x_1, x_2)-(a, b)\|^2\geq (\frac{\sqrt{3}}{3}-\frac{2\sqrt{3}}{9})^2=\frac1{27};$ and for any $(x_1, x_2) \in \tri_{33}$, we have $\min_{(a, b)\in \ga}\|(x_1, x_2)-(a, b)\|^2\geq (\frac{4\sqrt{3}}{9}-\frac{2\sqrt{3}}{9})^2= \frac4{27}$. Thus, the distortion error is obtained as
\begin{align*}
&\int\min_{a\in \ga}\|x-a\|^2 dP>\mathop{\int}\limits_{\tri_{31}\uu\tri_{32}} \min_{a\in \ga}\|x-a\|^2 dP+\mathop{\int}\limits_{\tri_{33}}\min_{a\in \ga}\|x-a\|^2 dP\\
&\geq \frac 29 \cdot\frac{1}{27}+\frac 19 \cdot\frac{4}{27} =\frac{2}{81}=0.0246914>V_3
\end{align*}
which is a contradiction. So, we can assume that $\ga$ contains at least one point above the line  $x_2=\frac {2\sqrt{3}}{9}$. Moreover, $\ga$ contains points from both $\tri_1$ and $\tri_2$. Now, if $\ga$ contains only one point above the line $x_2=\frac {2\sqrt{3}}{9}$, then the quantization error can be strictly reduced by moving the point to $\tri_3$. If $\ga$ contains two or more points above the line $x_2=\frac {2\sqrt{3}}{9}$, then the quantization error can be strictly reduced by moving at least one point which are above the line $x_2=\frac {2\sqrt{3}}{9}$ to $\tri_3$. This contradicts the fact that $\ga$ is an optimal set of $n$-means with $n\geq 3$. Hence, $\ga$ contains points from $\tri_i$ for all $1\leq i\leq 3$, i.e., $\ga\ii \tri_i\neq \es$ for all $1\leq i\leq 3$.
\end{proof}

\begin{lemma} \label{lemma31}
Let $\ga$ be an optimal set of $n$-means with $n\geq 3$ and let $n_k=\te{card}(\ga\ii\tri_k)$, $1\leq k\leq 3$.  Then, $\ga\sci \mathop{\uu}\limits_{i=1}^3 \tri_i$, and $|n_i-n_j|=0, \te{ or } 1$ for $1\leq i\neq j\leq 3.$
\end{lemma}

\begin{proof} We will consider the following three cases:

\tit{Case 1: $n=3 k$ for some positive integer $k\geq 1$. }

In this case, due to symmetry we can assume that $\ga$ contains $k$ points from each of $\tri_i$, otherwise, quantization error can be strictly reduced by redistributing the points in $\ga$ equally among $\tri_i$ for $1\leq i\leq 3$. So, $\ga$ does not contain any point from $\tri\setminus \mathop{\uu}\limits_{i=1}^3 \tri_i$ and $|n_i-n_j|=0$ for $1\leq i\neq j\leq 3$.

\tit{Case 2: $n=3 k+1$ for some positive integer $k\geq 1$. }

In this case, due to symmetry, we can assume that $\ga$ contains $k$ points from each of $\tri_i$, and the remaining one point is $(a, b)$. If possible, let $(a, b) \not \in \mathop{\uu}\limits_{i=1}^3 \tri_i$. Due to symmetry we assume that $(a, b)$ lies on the line $x_1=\frac 12$. Then, if $(a, b)$ lies on or above the line $x_2=\frac {\sqrt{3}}{6}$, then $M((a, b)|\ga)$ does not contain any point from $\tri_1\uu\tri_2$. So, quantization error can be strictly reduced by moving the point $(a, b)$ to $\tri_3$, which is a contradiction. We now assume that $(a, b)$ is on the line $x_1=\frac 12$, but below the line $x_2=\frac {\sqrt{3}}{6}$. Notice that if the point $(a, b)$ is below the line $x_2=\frac {\sqrt{3}}{6}$, then $M((a, b)|\ga)$ does not contain any point from $\tri_3$. Let us first assume that $k=1$, i.e., $\ga$ contains only one point from each of $\tri_1$, $\tri_2$ and $\tri_3$. Let $(a_i, b_i)$ be the points that $\ga$ contains from $\tri_i$ for $1\leq i\leq 3$. For any position of $(a, b)$ on the line $x_1=\frac 12$, always $\tri_{11}\sci M((a_1, b_1)|\ga)$. If $M((a_1, b_1)|\ga)$ does not contain any point from $\tri_{13}\uu\tri_{12}$, then we have $(a_1, b_1)=a(11)=(\frac{1}{18},\frac{1}{18 \sqrt{3}})$. But, then $M((a, b)|\ga)$ does not contain any point from $\tri_{11}\uu \tri_{13}\uu\tri_{121}$, and so $M((a_1, b_1)|\ga)$ must contain $\tri_{11}\uu\tri_{13}\uu\tri_{121}$. If $M((a_1, b_1)|\ga)$ does not contain any point from $\tri_{122}\uu\tri_{123}$, then, $(a_1, b_1)=a(11, 12, 121)=(\frac{7}{54},\frac{73}{378 \sqrt{3}})$.  But, then if we draw the boundary of the Voronoi regions of $(a_1, b_1)$ and $(a, b)$, we see that $M((a, b)|\ga)$ does not contain any point from $\tri_{11}\uu\tri_{13}\uu\tri_{121}\uu\tri_{123}$ and it covers largest area from $\tri_1$ if $(a, b)=(\frac 12, 0)$. Thus, we can take
\[(a_1, b_1)=a(11,13,121,123)=(\frac{4}{27},\frac{5}{27 \sqrt{3}}) \te{ and } (a, b)=(\frac 12, 0).\]
Write $A:=\tri_{11}\uu\tri_{13}\uu\tri_{121}\uu\tri_{123}$. If $A\sci M((a_1, b_1)|\ga)$ and $\tri_{122}\sci M((a, b)|\ga)$, then the distortion error is obtained as
\begin{align*}
&\int\min_{c\in\ga}\|x-c\|^2 dP=2\Big( \mathop{\int}\limits_{A}\|x-(a_1, b_1)\|^2+ dP+\mathop{\int}\limits_{\tri_{122}}\|x-(a, b)\|^2 dP\Big) +\mathop{\int}\limits_{\tri_{3}}\|x-(a_3, b_3)\|^2 dP\\
&=2\Big( \mathop{\int}\limits_{A}\|x-(\frac{4}{27},\frac{5}{27 \sqrt{3}})\|^2+ dP+\mathop{\int}\limits_{\tri_{122}}\|x-(\frac 12, 0)\|^2 dP\Big) +\mathop{\int}\limits_{\tri_{3}}\|x-a(3)\|^2 dP\\
&=\frac{1100}{59049}=0.0186286,
\end{align*}
which is larger than $0.015775$, where $\frac{23}{1458}=0.015775$ is the distortion error due to the four-point set $\gb$ given by $\gb:=\set{a(13), a(11, 12), a(2), a(3)}.$  But, this contradicts the optimality of $\ga$. Notice that in this calculation we assumed $\tri_{11}\uu\tri_{13}\uu\tri_{121}\uu\tri_{123}\sci M((a_1, b_1)|\ga)$ and $\tri_{122}\sci M((a, b)|\ga)$. If not, then $M((a_1, b_1)|\ga)$ will contain points from $\tri_{122}$, and then the boundary of the Voronoi regions of the points $(a_1, b_1)$ and $(a, b)$ will move further right from the current position, and proceeding similarly we can show that a contradiction arises. Similarly, we can show that if $k\geq 2$, contradiction arises. Thus, the point $(a,b)$ must belong to either $\tri_1$, $\tri_2$, or $\tri_3$, i.e., $\ga$ must contain $(k+1)$ points from one of $\tri_i$ for $1\leq i\leq 3$, and $k$ points from each of the remaining two triangles.

\tit{Case 3: $n=3 k+2$ for some positive integer $k\geq 1$. }

In this case, due to symmetry, we can assume that $\ga$ contains $k$ points from each of $\tri_i$, and the other two points are symmetrically distributed over the triangle $\tri$ with respect to one of the medians, say the median passing through the vertex $(\frac 12, \frac{\sqrt{3}}{2})$. Then, due to symmetry $\ga$ must contain $(k+1)$ points from $\tri_1$ and $(k+1)$ points from $\tri_2$, otherwise quantization error can be strictly reduced by moving one point to $\tri_1$ and one point to $\tri_2$.

Hence, in each case, we see that if $\ga$ is an optimal set of $n$-means with $n\geq 3$, then $\ga\sci \mathop{\uu}\limits_{i=1}^3 \tri_i$, and $|n_i-n_j|=0, \te{ or } 1$ for $1\leq i\neq j\leq 3.$
Thus, the proof of the lemma is complete.
\end{proof}

As an immediate consequence of Lemma 3.4 we obtain the statement below.

\begin{cor} \label{cor21}  The set $\set{a(1), a(2), a(3)}$ is a unique optimal set of three-means for the measure $P$ with quantization error $V_3=\frac 1{54}=0.0185185.$
\end{cor}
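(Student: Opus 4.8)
The plan is to specialize the two preceding lemmas to $n=3$ and then identify the three optimal points as the centroids $a(1),a(2),a(3)$. First I would invoke Lemma~\ref{lemma30} to guarantee that an optimal set $\ga$ of three-means satisfies $\ga\ii\tri_i\neq\es$ for each $1\le i\le 3$, so that $n_i:=\te{card}(\ga\ii\tri_i)\ge 1$. By Lemma~\ref{lemma31} we also have $\ga\sci\tri_1\uu\tri_2\uu\tri_3$, and since the three basic triangles are pairwise disjoint this forces $n_1+n_2+n_3=3$; combined with $n_i\ge 1$ we get $n_1=n_2=n_3=1$. Thus any optimal set consists of exactly one point $a_i\in\tri_i$ for each $i$, with no points in $\tri\setminus(\tri_1\uu\tri_2\uu\tri_3)$.

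Next I would show that each $a_i$ is forced to equal $a(i)$. The key geometric fact is that, because each side is cut at its middle third, the three basic triangles are separated by gaps, and the configuration inherits the threefold symmetry of $R$ noted in the proof of Proposition~\ref{prop1}. I would argue that, for an optimal $\ga$, the Voronoi region $M(a_i\mid\ga)$ agrees $P$-almost everywhere with $\tri_i$: if a positive-mass portion of $\tri_j$ with $j\neq i$ were served by $a_i$, or a portion of $\tri_i$ by $a_j$, then, since the triangles lie across a gap, the distortion could be strictly decreased by relocating points into their own triangles, exactly as in the relocation steps of Lemma~\ref{lemma30}. Once $M(a_i\mid\ga)\ii R=\tri_i\ii R$ up to a $P$-null set, Proposition~\ref{prop10}$(iii)$ (equivalently the centroid computation in Note~\ref{note1}) gives $a_i=E(X\mid X\in\tri_i)=a(i)$.

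Finally I would compute the error and conclude uniqueness. Using \eqref{eq1} with $k=1$ and $V=\frac16$, each triangle contributes $\int_{\tri_i}\|x-a(i)\|^2\,dP=\frac13\cdot\frac19 V=\frac1{162}$, so $V_3=\sum_{i=1}^3\frac1{162}=\frac1{54}$, matching the upper bound already recorded for $\set{a(1),a(2),a(3)}$ in the proof of Lemma~\ref{lemma30}. Uniqueness is then immediate: the partition into $\tri_1,\tri_2,\tri_3$ is forced by the counting argument above, and on each piece the centroid $a(i)$ is the unique minimizer of $c\mapsto\int_{\tri_i}\|x-c\|^2\,dP$ by the parallel-axis identity $\int_{\tri_i}\|x-c\|^2\,dP=\int_{\tri_i}\|x-a(i)\|^2\,dP+P(\tri_i)\|c-a(i)\|^2$. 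Hence $\set{a(1),a(2),a(3)}$ is the unique optimal set of three-means.

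The step I expect to be the main obstacle is establishing that $M(a_i\mid\ga)$ coincides $P$-a.e.\ with $\tri_i$, i.e.\ that no mass crosses the gaps between the triangles in an optimal configuration; everything else is either a counting argument or a direct application of \eqref{eq1}. This boundary-control step is exactly what upgrades the upper bound $V_3\le\frac1{54}$ to equality together with uniqueness, and it is the reason the separation and relocation estimates of Lemma~\ref{lemma30} are needed.
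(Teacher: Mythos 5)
Your proposal is correct and follows essentially the same route as the paper: the paper derives this corollary directly from Lemmas~\ref{lemma30} and \ref{lemma31} (one point per basic triangle, none outside $\tri_1\uu\tri_2\uu\tri_3$), identifies each point with the centroid $a(i)$, and reads off $V_3=3\cdot\frac1{162}=\frac1{54}$ from the computation already recorded in the proof of Lemma~\ref{lemma30}. The only difference is that the paper labels this ``an immediate consequence of Lemma~\ref{lemma31},'' whereas you explicitly supply the Voronoi-alignment and parallel-axis details that the paper leaves implicit; your relocation argument for the alignment step is at the same level of rigor as the analogous ``strictly reduced by moving points'' arguments the paper itself uses.
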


The following lemma plays an important role in the sequel.

\begin{lemma} \label{lemma22}
Let $n\geq 3$ and let $\ga$ be an optimal set of $n$-means. For $1\leq i\leq 3$, set $\gb_i:=\ga\ii \tri_i$ and $n_i:=\te{card}(\gb_i)$. Then, $S_i^{-1}(\gb_i)$ is an optimal set of $n_i$-means, and
$V_n=\mathop{\sum}\limits_{i=1}^3 \frac 1{27}V_{n_i}.$
\end{lemma}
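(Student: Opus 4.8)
The plan is to reduce the $n$-point problem on the whole R-triangle to three independent $n_i$-point problems, one on each basic triangle, by exploiting the self-similarity of $P$. Two ingredients are already in hand: Lemma~\ref{lemma30} guarantees $\gb_i\neq\es$ for each $i$, and Lemma~\ref{lemma31} gives $\ga\sci\tri_1\uu\tri_2\uu\tri_3$. Since $P$ is supported on $R\sci\tri_1\uu\tri_2\uu\tri_3$ and the three closed triangles are disjoint, these let me write at the outset $V_n=\sum_{i=1}^3\int_{\tri_i}\min_{a\in\ga}\|x-a\|^2\,dP$.

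The computational engine is a scaling identity. Since each $S_i$ is a similarity of ratio $\frac13$ and, by Lemma~\ref{lemma1}, the conditional distribution of $P$ on $\tri_i$ is $P\circ S_i^{-1}$, the change of variables $x=S_i(y)$ (this is exactly the two-dimensional computation behind \eqref{eq1}) gives, for any finite $\gd\sci\tri$,
\[\int_{\tri_i}\min_{c\in S_i(\gd)}\|x-c\|^2\,dP=\frac1{27}\int\min_{c\in\gd}\|y-c\|^2\,dP.\]
I would record this first, as it converts any integral over $\tri_i$ of a distortion by a set of the form $S_i(\gd)$ into $\frac1{27}$ times a \emph{global} distortion by $\gd$.

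The crux is a \emph{separation} step: for every $x\in\tri_i$ the nearest point of $\ga$ already lies in $\gb_i$, so that $\min_{a\in\ga}\|x-a\|=\min_{a\in\gb_i}\|x-a\|$ on $\tri_i$. This is the only place where the particular geometry of the R-triangle is used. Because the contraction ratio is $\frac13$, each basic triangle has diameter $\frac13$, while any two distinct basic triangles are at distance exactly $\frac13$; hence for $x\in\tri_i$ and any $b\in\gb_i$ (nonempty by Lemma~\ref{lemma30}) one has $\|x-b\|\le\frac13\le\|x-a'\|$ for every $a'\in\gb_j$ with $j\neq i$. Thus $\min_{a\in\gb_i}\|x-a\|\le\min_{a\in\gb_j}\|x-a\|$ for all $j$, and the global minimum over $\ga=\gb_1\uu\gb_2\uu\gb_3$ is attained in $\gb_i$. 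Combining this with the scaling identity applied to $\gd=S_i^{-1}(\gb_i)$ yields
\[V_n=\sum_{i=1}^3\int_{\tri_i}\min_{a\in\gb_i}\|x-a\|^2\,dP=\sum_{i=1}^3\frac1{27}\int\min_{c\in S_i^{-1}(\gb_i)}\|y-c\|^2\,dP\ge\sum_{i=1}^3\frac1{27}V_{n_i}.\]

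For the reverse inequality I would build a competitor. Let $\gg_i$ be an optimal set of $n_i$-means for $P$ (it exists, and, being a set of centroids, lies in the convex hull $\tri$ of the support by Proposition~\ref{prop10}), and put $\gd:=\UU_{i=1}^3 S_i(\gg_i)$, a set of $n$ points with $S_i(\gg_i)\sci\tri_i$. Estimating the distortion of $\gd$ triangle by triangle and discarding, on $\tri_i$, every candidate point outside $S_i(\gg_i)$, the scaling identity gives $V_n\le\int\min_{a\in\gd}\|x-a\|^2\,dP\le\sum_{i=1}^3\frac1{27}V_{n_i}$. The two bounds force $V_n=\sum_{i=1}^3\frac1{27}V_{n_i}$; and since each term satisfies $\frac1{27}\int\min_{c\in S_i^{-1}(\gb_i)}\|y-c\|^2\,dP\ge\frac1{27}V_{n_i}$, equality must hold termwise, i.e. $\int\min_{c\in S_i^{-1}(\gb_i)}\|y-c\|^2\,dP=V_{n_i}$, which says precisely that $S_i^{-1}(\gb_i)$ is an optimal set of $n_i$-means. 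The main obstacle is the separation step: it is the sole use of the metric coincidence $\te{diam}\,\tri_i=\te{dist}(\tri_i,\tri_j)=\frac13$ and of the nonemptiness of each $\gb_i$, while everything else is bookkeeping around the scaling identity.
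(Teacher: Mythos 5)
Your proof is correct, and it uses the same two computational ingredients as the paper's proof: the scaling identity $\int_{\tri_i}\min_{c\in S_i(\gd)}\|x-c\|^2\,dP=\frac 1{27}\int\min_{c\in\gd}\|y-c\|^2\,dP$ and a competitor assembled from optimal $n_i$-point sets $\gg_i$. The logical packaging, however, is genuinely different. The paper fixes one index $i$, assumes $S_i^{-1}(\gb_i)$ is not optimal, and derives a contradiction by swapping $\gb_i$ for $S_i(\gg_i)$ while keeping the other two blocks of $\ga$; the formula $V_n=\sum_{i=1}^3\frac 1{27}V_{n_i}$ is then computed in a second pass. You instead prove a two-sided bound — the lower bound from the blockwise decomposition plus scaling, the upper bound from the global competitor $\UU_{i=1}^3 S_i(\gg_i)$ — and read off both conclusions simultaneously from the forced termwise equality; this is a cleaner structure that avoids the proof by contradiction. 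A further difference in your favor: the paper opens by writing $V_n=\sum_{i=1}^3\int_{\tri_i}\min_{a\in\gb_i}\|x-a\|^2\,dP$ with no justification, and this step is not purely formal, since it requires that for $x\in\tri_i$ the nearest point of $\ga$ already lies in $\gb_i$. Your separation argument (each $\tri_i$ has diameter $\frac 13$, $\te{dist}(\tri_i,\tri_j)=\frac 13$ for $i\neq j$, and $\gb_i\neq\es$ by Lemma~\ref{lemma30}) supplies exactly this missing justification, and isolates the single place where the geometry of the R-triangle enters. The cost is a somewhat longer write-up; the gain is a self-contained argument that would survive scrutiny at the one point where the paper's version is tacit.
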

\begin{proof} For $n\geq 3$, by Lemma~\ref{lemma30} and Lemma~\ref{lemma31}, we have $\ga=\mathop{\uu}\limits_{i=1}^3\gb_i$, $n=n_1+n_2+n_3$, and so
$
V_n=\mathop{\sum}\limits_{i=1}^3\mathop{\int}\limits_{\tri_i}\mathop{\min}\limits_{a\in \gb_i}\|x-a\|^2 dP$.
If $S_1^{-1}(\gb_1)$ is not an optimal set of $n_1$-means for $P$, then there exists a set $\gg_1\sci \D R^2$ with $\te{card}(\gg_1)=n_1$ such that
$\int \min_{a\in \gg_1} \|x-a\|^2 dP<\int \min_{a\in S_1^{-1}(\gb_1)} \|x-a\|^2 dP$. But then, $\gd:=S_1(\gg_1)\uu \gb_2\uu \gb_3$ is a set of cardinality $n$, and since
\begin{align*}
&\mathop{\int}\limits_{\tri_1}\min_{a\in S_1(\gg_1)}\|x-a\|^2 dP=\mathop{\int}\limits_{\tri_1}\min_{a\in \gg_1}\|x-S_1(a)\|^2 dP=\frac 1 3 \mathop{\int}\limits_{}\min_{a\in \gg_1}\|x-S_1(a)\|^2 d(P\circ S_1^{-1})\\
&=\frac 1 {27} \mathop{\int}\limits_{}\min_{a\in \gg_1}\|x-a\|^2 dP<\frac 1 {27} \mathop{\int}\limits_{}\min_{a\in S_1^{-1}(\gb_1)}\|x-a\|^2 dP=\frac 1 {27} \mathop{\int}\limits_{}\min_{a\in\gb_1}\|x-S_1^{-1}(a)\|^2 dP\\
&=\frac 1 {3} \mathop{\int}\limits_{}\min_{a\in\gb_1}\|x-a\|^2 d(P\circ S_1^{-1})=\mathop{\int}\limits_{\tri_1}\min_{a\in\gb_1}\|x-a\|^2 dP,
\end{align*}
we have
\[\int\min_{a\in \gd}\|x-a\|^2 dP=\mathop{\int}\limits_{\tri_1}\min_{a\in S_1(\gg_1)}\|x-a\|^2 dP+\mathop{\sum}\limits_{i=2}^3 \mathop{\int}\limits_{\tri_i} \min_{a\in \gb_i}\|x-a\|^2 dP<\int\min_{a\in \ga}\|x-a\|^2 dP,\] which contradicts the fact that $\ga$ is an optimal set of $n$-means for $P$. Similarly, it can be proved that $S_2^{-1}(\gb_2)$ and $S_3^{-1}(\gb_3)$ are optimal sets of $n_2$- and $n_3$-means, respectively. Hence, it follows that,
\[V_n=\mathop{\sum}\limits_{i=1}^3\frac 1 3\mathop{\int}\mathop{\min}\limits_{a\in \gb_i}\|x-a\|^2 d(P\circ S_i^{-1})=\mathop{\sum}\limits_{i=1}^3 \frac 1{27}\mathop{\int}\mathop{\min}\limits_{a\in S_i^{-1}(\gb_i)}\|x-a\|^2 dP=\mathop{\sum}\limits_{i=1}^3 \frac 1{27}V_{n_i},\]
which proves the assertion.
\end{proof}

\begin{lemma} \label{lemma9} Let $P=\mathop{\sum}\limits_{\go \in I^k} \frac 1{3^k} P\circ S_\go^{-1}$ for some $k\geq 1$.
Let $\ga$ be an optimal set of $n$-means for the measure $P$. Then, $\set{S_\go(a) : a \in \ga}$ is an optimal set of $n$-means for the image measure $P\circ S_\go^{-1}$. The converse is also true: If $\gb$ is an optimal set of $n$-means for the image measure $P\circ S_\go^{-1}$, then $\set{S_\go^{-1}(a) : a \in \gb}$ is an optimal set of $n$-means for $P$.
\end{lemma}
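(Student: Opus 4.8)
The plan is to reduce everything to a single scaling identity: the similarity $S_\go$ multiplies all squared distances by the fixed constant $c^2 := 9^{-k}$, where $c := 3^{-k}$ is the contraction ratio of $S_\go$ for $\go \in I^k$. Since each generator satisfies $S_i(x_1,x_2) = \frac 13(x_1,x_2)+v_i$, the composition $S_\go$ is a bijective similarity of $\D R^2$ with $\|S_\go(x)-S_\go(y)\| = c\,\|x-y\|$ for all $x,y \in \D R^2$. The whole lemma is then a bookkeeping consequence of this fact applied to the image measure $P\circ S_\go^{-1}$.

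First I would establish the distortion-scaling formula: for an arbitrary finite set $\ga\sci\D R^2$,
\[
\int \min_{b \in S_\go(\ga)}\|y-b\|^2 \, d(P\circ S_\go^{-1})(y) = c^2 \int \min_{a\in\ga}\|x-a\|^2\,dP(x).
\]
This follows by the change of variables $y=S_\go(x)$ against the image measure, together with the bijection $b=S_\go(a)$ between $S_\go(\ga)$ and $\ga$, which turns $\min_{b\in S_\go(\ga)}\|S_\go(x)-b\|^2$ into $\min_{a\in\ga}\|S_\go(x)-S_\go(a)\|^2 = c^2\min_{a\in\ga}\|x-a\|^2$. This is exactly the type of computation already carried out inside the proof of Lemma~\ref{lemma22}, here restricted to a single map rather than summed over the three generators.

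Next I would use that $\ga\mapsto S_\go(\ga)$ is a bijection of the collection of $n$-point subsets of $\D R^2$ onto itself: because $S_\go$ is a bijection of $\D R^2$ with inverse $S_\go^{-1}$, it preserves cardinality, so it carries sets of cardinality $\leq n$ to sets of cardinality $\leq n$ and back. Taking the infimum over all such sets on both sides of the scaling formula, and using $c^2>0$, yields $V_n(P\circ S_\go^{-1}) = c^2\,V_n(P)$; more importantly, it shows that the infimum on one side is attained precisely at the $S_\go$-image of a minimizer on the other side.

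Finally the two assertions drop out. If $\ga$ attains $V_n(P)$, the scaling formula gives that $S_\go(\ga)$ has distortion $c^2\,V_n(P)=V_n(P\circ S_\go^{-1})$, hence is optimal for the image measure; conversely, if $\gb$ attains $V_n(P\circ S_\go^{-1})$, then $S_\go^{-1}(\gb)$ has distortion $c^{-2}\cdot c^2\,V_n(P)=V_n(P)$ and is optimal for $P$. I do not expect a genuine obstacle here; the only point demanding care is the change-of-variables bookkeeping, and the observation that $S_\go$ being a \emph{bijection} of $\D R^2$ is precisely what lets the two infima match up set-for-set, rather than merely yielding the one-sided inequality $V_n(P\circ S_\go^{-1})\le c^2\,V_n(P)$.
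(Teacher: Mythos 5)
Your proposal is correct and rests on the same two ingredients as the paper's proof: the change-of-variables identity $\int f \, d(P\circ S_\go^{-1})=\int f\circ S_\go \, dP$ and the fact that $S_\go$ is a bijective similarity of ratio $3^{-k}$, so it scales squared distances by $9^{-k}$ and carries $n$-point sets bijectively to $n$-point sets. The only difference is packaging: the paper argues by contradiction, transporting a hypothetically better competitor through $S_\go^{\pm 1}$ so that the common factor $9^{-k}$ cancels across the strict inequality, whereas you take infima in the scaling identity to obtain the explicit relation $V_n(P\circ S_\go^{-1})=9^{-k}V_n(P)$, from which both directions of the lemma follow at once.
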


\begin{proof}
If $\set{S_\go(a) : a \in \ga}$ is not an optimal set of $n$-means for the image measure $P\circ S_\go^{-1}$, then we can find a set $\gg\sci \D R^2$ with card$(\gg)=n$ such that
\begin{align*} \int \min_{a\in \gg} &\|x-a\|^2 d(P\circ S_\go^{-1}) <\int \min_{a\in \ga} \|x-S_\go(a)\|^2 d(P\circ S_\go^{-1}),
\end{align*}
which implies
$\int \min_{a\in \gg} \|S_\go(x)-a\|^2 dP<\int \min_{a\in \ga} \|S_\go(x)-S_\go(a)\|^2 dP,
$
i.e.,
\[\int \min_{a\in  S_\go^{-1}(\gg)} \|x-a\|^2 dP <\int \min_{a\in \ga} \|x-a\|^2 dP.
\]
Notice that $S_\go^{-1}(\gg)$ has cardinality $n$, and so the last inequality contradicts the fact that $\ga$ is an optimal set of $n$-means for $P$. Hence, $\set{S_\go(a) : a \in \ga}$ is an optimal set of $n$-means for the image measure $P\circ S_\go^{-1}$. To prove the converse, let $\gb$ be an optimal set of $n$-means for the image measure $P\circ S_\go^{-1}$.  If $S_\go^{-1}(\gb)$ is not an optimal set of $n$-means for $P$, then there exists a set $\gd\sci \D R^2$ with $\te{card}(\gd)=n$ such that
$\int \min_{a\in \gd} \|x-a\|^2 dP<\int \min_{a\in S_\go^{-1}(\gb)} \|x-a\|^2 dP,
$
which implies
\[\int \min_{a\in \gd} \|S_\go (x)-S_\go(a)\|^2 dP<\int \min_{a\in  S_\go^{-1}(\gb)} \|S_\go(x)-S_\go(a)\|^2 dP\]
i.e.,
\[\int \min_{a\in  S_\go(\gd)} \|x-a\|^2 d(P\circ S_\go^{-1}) <\int \min_{a \in \gb} \|x-a\|^2 d(P\circ S_\go^{-1}).\]
Notice that $S_\go(\gd)$ has cardinality $n$, and so the last inequality contradicts the fact that $\gb$ is an optimal set of $n$-means for $P\circ S_\go^{-1}$. Thus, we deduce that $\set{S_\go^{-1}(a) : a \in \gb}$ is an optimal set of $n$-means for $P$ if $\gb$ is an optimal set of $n$-means for the image measure $P\circ S_\go^{-1}$. 
\end{proof}

\begin{remark} If $\gb$ is an optimal set of $n$-means for the image measure $P\circ S_\go^{-1}$, and $\gg$ is an optimal set of $\ell$-means for the image measure $P\circ S_\gt^{-1}$, then   $S_\go^{-1}(\gb) \uu S_\gt^{-1}(\gg)$ is not necessarily an optimal set of $(n+\ell)$-means for $P$.

\end{remark}
\begin{lemma}\label{lemma10}
The set $\set{a(1), a(2), a(33), a(31, 32)}$ is an optimal set of four-means with quantization error $V_4=\frac 1{27}(2 V_1+V_2)$.
\end{lemma}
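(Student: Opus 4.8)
The plan is to combine the two structural lemmas just established with the two-means computation, so that almost no fresh estimation is required. First, since $n=4\geq 3$, Lemma~\ref{lemma31} forces $\ga\ci\tri_1\uu\tri_2\uu\tri_3$ with the cardinalities $n_k=\te{card}(\ga\ii\tri_k)$ satisfying $n_1+n_2+n_3=4$ and $|n_i-n_j|\in\{0,1\}$. Up to permutation the unique solution of these constraints is $(n_1,n_2,n_3)=(2,1,1)$; that is, exactly one of the three basic triangles carries two of the optimal points while each of the other two carries a single point.

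Next I would read the quantization error straight off Lemma~\ref{lemma22}. Irrespective of which triangle holds the pair, that lemma gives $V_4=\frac{1}{27}(V_{n_1}+V_{n_2}+V_{n_3})=\frac{1}{27}(2V_1+V_2)$. Substituting $V_1=\frac16$ from Note~\ref{note1} (equivalently Lemma~\ref{lemma333}) and $V_2=\frac{5}{54}$ from Proposition~\ref{prop1} yields $V_4=\frac{1}{27}\cdot\frac{23}{54}=\frac{23}{1458}$, which is precisely the four-point value $0.015775$ already recorded in the proof of Lemma~\ref{lemma31}. This settles the formula for $V_4$.

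To pin down the optimal points, I would apply Lemma~\ref{lemma22} once more, using that each $S_i^{-1}(\gb_i)$ is itself an optimal set of $n_i$-means. For the two triangles carrying a single point, the optimal one-mean is the expectation (Note~\ref{note1}), so $S_i^{-1}(\gb_i)=\{E(X)\}$ and hence $\gb_i=\{a(i)\}$. For the triangle carrying the pair, which by symmetry we may take to be $\tri_3$ without loss of generality, the set $S_3^{-1}(\gb_3)$ is an optimal two-means for $P$; by Proposition~\ref{prop1} we may choose it to be $\set{a(3),a(1,2)}=\set{(\tfrac12,\tfrac{7}{6\sqrt3}),(\tfrac12,\tfrac{1}{6\sqrt3})}$, and applying $S_3$ gives $\gb_3=\set{S_3(a(3)),S_3(a(1,2))}=\set{a(33),a(31,32)}$ (equivalently, one invokes Lemma~\ref{lemma9} with the word $\go=3$). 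Assembling the three pieces produces the claimed set $\set{a(1),a(2),a(33),a(31,32)}$, which by construction attains the value $V_4$ computed above and is therefore optimal.

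The genuine work here is already carried by Lemmas~\ref{lemma30}--\ref{lemma22}; the remaining care is bookkeeping. Two points deserve attention: first, the value $\frac{1}{27}(2V_1+V_2)$ is independent of which triangle is doubled, so all three symmetric placements are equally optimal and the displayed set is merely one representative (the others being recorded as a remark). Second, an optimal two-means is not unique, so Lemma~\ref{lemma22} guarantees only that $S_3^{-1}(\gb_3)$ is \emph{some} optimal two-means; exhibiting the specific median-symmetric choice $\set{a(3),a(1,2)}$ is what fixes the explicit coordinates. The identities $S_3(a(3))=a(33)$ and $S_3(a(1,2))=a(31,32)$ are then a routine substitution from $a(\go)=S_\go(E(X))$, and I do not expect any serious obstacle beyond this verification.
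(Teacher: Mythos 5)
Your proposal is correct and follows essentially the same route as the paper: both use Lemma~\ref{lemma30} and Lemma~\ref{lemma31} to force the cardinality split $(1,1,2)$ among $\tri_1,\tri_2,\tri_3$, then Lemma~\ref{lemma22} to conclude each $S_i^{-1}(\gb_i)$ is an optimal $n_i$-means set (with Proposition~\ref{prop1} supplying the two-means representative $\set{a(3),a(1,2)}$) and to read off $V_4=\frac{1}{27}(2V_1+V_2)$. Your additional remarks on non-uniqueness of the two-means set and on why the assembled set attains $V_4$ only make explicit what the paper's ``we can take'' glosses over.
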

\begin{proof}
Let $\ga$ be an optimal set of four-means. Let $\gb_i=\ga\ii \tri_i$ for $1\leq i\leq 3$. By Lemma~\ref{lemma30} and Lemma~\ref{lemma31}, we can assume that $\te{card}(\gb_1)=\te{card}(\gb_2)=1$ and $\te{card}(\gb_3)=2$, and $\ga=\mathop{\uu}\limits_{i=1}^3\gb_i$. By Lemma~\ref{lemma22}, both $S_1^{-1}(\gb_1)$ and $S_2^{-1}(\gb_2)$ are optimal sets of one-mean, and $S_3^{-1}(\gb_3)$ is an optimal set of two-means. Thus, we can take
$S_1^{-1}(\gb_1)=S_2^{-1}(\gb_2)=(\frac 12, \frac{\sqrt{3}}{6})$, and  $S_3^{-1}(\gb_3)=\set{a(3), a(1, 2)}$ yielding $\gb_1=\set{a(1)}$, $\gb_2=\set{a(2)}$, and $\gb_3=\set{a(33), a(31, 32)}$. By Lemma~\ref{lemma22}, we have the quantization error as $V_4=\frac 1{27}(2 V_1+V_2)$, which completes the proof of the lemma.
\end{proof}

\begin{remark} Due to symmetry, there are nine optimal sets of four-means with quantization error $V_4=\frac{23}{1458}$  (see Figure~\ref{Fig}).
\end{remark}

\begin{lemma} \label{lemma11} Let $n=3^{\ell(n)}+1$ for some positive integer $\ell(n)$. Then, $\set{a(\go) : \go \in I^{\ell(n)}\setminus \set{\gt}}\uu S_\gt(\ga_2)$ is an optimal set of $n$-means for any $\gt\in I^{\ell(n)}$.
\end{lemma}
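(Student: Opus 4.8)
The plan is to induct on $\ell := \ell(n)$, so that $n = 3^\ell + 1$. The base case $\ell = 1$, i.e.\ $n = 4$, is precisely Lemma~\ref{lemma10}, whose optimal sets are $\{a(1), a(2)\} \cup S_3(\ga_2)$ and its symmetric copies. Before running the induction I would record the companion fact for powers of three: for each $m \geq 1$ an optimal set of $3^m$-means is $\{a(\go) : \go \in I^m\}$, with $V_{3^m} = \frac{V}{9^m}$. This follows by a parallel induction whose base $m = 1$ is Corollary~\ref{cor21}: by Lemma~\ref{lemma31} an optimal set of $3^m$-means must split evenly as $n_1 = n_2 = n_3 = 3^{m-1}$, so Lemma~\ref{lemma22} gives $V_{3^m} = \frac{3}{27}V_{3^{m-1}} = \frac{1}{9}V_{3^{m-1}}$, and the reassembly uses the identity $S_i(a(\go)) = a(i\go)$ from Note~\ref{note1}.

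Next I would pin down the value $V_n$. Let $\ga$ be an optimal set of $n$-means and set $n_i = \te{card}(\ga \ii \tri_i)$. By Lemma~\ref{lemma31} the $n_i$ differ pairwise by at most one, and since $n = 3^\ell + 1 \equiv 1 \pmod 3$ the only way to split $n$ into three such parts is for $(n_1, n_2, n_3)$ to be a permutation of $(3^{\ell-1}+1,\, 3^{\ell-1},\, 3^{\ell-1})$. Lemma~\ref{lemma22} then forces the recursion
\[ V_n = \frac{1}{27}\big(V_{3^{\ell-1}+1} + 2\,V_{3^{\ell-1}}\big). \]
Substituting the inductive value $V_{3^{\ell-1}+1} = \frac{1}{27^{\ell-1}}\big((3^{\ell-1}-1)V + V_2\big)$ and $V_{3^{\ell-1}} = \frac{V}{9^{\ell-1}}$ and simplifying gives the closed form $V_n = \frac{1}{27^\ell}\big((3^\ell - 1)V + V_2\big)$.

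It then remains to show that each candidate set $\gd_\gt := \{a(\go) : \go \in I^\ell \setminus \{\gt\}\} \cup S_\gt(\ga_2)$ attains this value. Since $\gd_\gt$ has exactly $n$ points, its distortion is at least $V_n$; for the reverse inequality I would assign the mass of each $\tri_\go$ with $\go \neq \gt$ to $a(\go) \in \gd_\gt$ and the mass of $\tri_\gt$ to the nearer point of $S_\gt(\ga_2)$, giving
\[ \int \min_{a \in \gd_\gt} \|x - a\|^2 \, dP \leq \sum_{\go \neq \gt} \int_{\tri_\go} \|x - a(\go)\|^2 \, dP + \int_{\tri_\gt} \min_{a \in S_\gt(\ga_2)} \|x - a\|^2 \, dP. \]
By \eqref{eq1} each of the $3^\ell - 1$ centroid terms equals $\frac{V}{27^\ell}$, while by the similarity scaling used in Lemma~\ref{lemma22} (a factor $(\tfrac13)^\ell$ from the measure and $(\tfrac19)^\ell$ from the squared distances, with $S_\gt(\ga_2)$ optimal two-means for $P \circ S_\gt^{-1}$ by Lemma~\ref{lemma9}) the last term equals $\frac{V_2}{27^\ell}$. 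The right-hand side is therefore $\frac{1}{27^\ell}\big((3^\ell - 1)V + V_2\big) = V_n$, so the inequalities collapse to equalities and $\gd_\gt$ is optimal for every $\gt \in I^\ell$.

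The hard part will be the two structural inputs rather than the arithmetic: establishing the power-of-three fact cleanly, and checking that the congruence $n \equiv 1 \pmod 3$ together with Lemma~\ref{lemma31} genuinely forces the split $(3^{\ell-1}+1, 3^{\ell-1}, 3^{\ell-1})$. Once these are in place the error computation is routine, and it is exactly the freedom in which triangle carries the extra point (through the symmetry used throughout the paper), combined with the inductive freedom at level $\ell - 1$, that allows $\gt$ to range over all of $I^\ell$.
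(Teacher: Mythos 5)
Your proposal is correct, but it follows a genuinely different route from the paper's. The paper's proof is structural and top-down: it takes an arbitrary optimal set $\ga$ of $(3^{k+1}+1)$-means, splits it as $\gb_1\uu\gb_2\uu\gb_3$ via Lemmas \ref{lemma30} and \ref{lemma31}, uses the first assertion of Lemma \ref{lemma22} (each $S_i^{-1}(\gb_i)$ is again optimal) to identify $\gb_1,\gb_2$ with images of the optimal $3^k$-point set and, via the inductive hypothesis, $\gb_3$ with the image of a level-$k$ candidate, and then reassembles; note that this tacitly uses the structure of optimal sets of $3^k$-means, i.e., the content of Proposition \ref{prop24}, which the paper states and proves only after this lemma. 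You instead argue through values: the congruence $n\equiv 1 \pmod 3$ together with Lemma \ref{lemma31} forces the split $(3^{\ell-1}+1,\,3^{\ell-1},\,3^{\ell-1})$, the error recursion in Lemma \ref{lemma22} then pins down $V_n$ in closed form, and a direct computation with \eqref{eq1} and the similarity scaling shows that every candidate set attains exactly this value, whence its optimality is automatic. Your route buys three things: it needs only the value $V_{3^m}=V/9^m$ rather than the uniqueness of the optimal $3^m$-point sets (so it removes the paper's forward reference), it proves optimality for every $\gt\in I^{\ell}$ directly instead of for some $\gt$ plus an appeal to symmetry, and it produces the closed form of $V_{3^{\ell}+1}$ (a special case of the formula in Theorem \ref{Th1}) as a by-product. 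What the paper's route buys is a classification: its argument shows that every optimal set of $(3^{\ell}+1)$-means has the candidate form, which is what later supports the counting of optimal sets in Theorem \ref{Th1}. One point you should make explicit when writing this up: the induction hypothesis must carry the closed-form value of $V_{3^{\ell-1}+1}$ alongside the optimality of the candidates, since the assignment computation by itself bounds the candidate's distortion only from above; this is harmless, because Lemma \ref{lemma10} supplies $V_4=\frac{1}{27}(2V+V_2)$ for the base case and your inductive step re-derives the value at each level.
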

\begin{proof} We will prove this statement by induction. If $n=4$, then it is true by Lemma~\ref{lemma10}.  Assume that it is true if $n=3^k+1$ for some positive integer $k$. Let $\ga$ be an optimal set of $n$-means for $n=3^{k+1}+1$. Let $\gb_i=\ga\ii \tri_i$ for $1\leq i\leq 3$.  By Lemma~\ref{lemma30} and Lemma~\ref{lemma31}, we can assume that $\te{card}(\gb_1)=\te{card}(\gb_2)=3^{k}$ and $\te{card}(\gb_3)=3^k+1$, and $\ga=\mathop{\uu}\limits_{i=1}^3\gb_i$. Then, by Lemma~\ref{lemma22}, both $S_1^{-1}(\gb_1)$ and $S_2^{-1}(\gb_2)$ are optimal sets of $3^k$-means, and $S_3^{-1}(\gb_3)$ is an optimal of $(3^k+1)$-means. Thus, we can write
$\gb_1=\set{a(1\go) : \go \in I^k}$, $\gb_2=\set{a(2\go) : \go \in I^k}$ and $\gb_3=(\set{a(3\go) : \go\in I^k\setminus \set{\gt}})\uu S_{3\gt}(\ga_2)$ for some $\gt\in I^k$. Hence, $\ga=\set{a(\go) : \go \in I^{k+1}\setminus \set{\gt}}\uu S_\gt(\ga_2)$ for some $\gt\in I^{k+1}$ is an optimal set of $n$-means for $n=3^{k+1}+1$. Thus, by the Principle of Mathematical Induction, the proof of the lemma is complete.
\end{proof}

Now we prove the following propositions which provide further information on the optimal sets of $n$-means.

\begin{prop}\label{prop24}
Let $n\in \D N$ be such that $n=3^{\ell(n)}$ for some positive integer $\ell(n)$. Then, the set $\ga_{3^{\ell(n)}}:=\set{ a(\go) : \go \in I^{\ell(n)}}$ is a unique optimal set of $n$-means for $P$ with quantization error $V_n=\frac 16 \frac1{9^{\ell(n)}}$.
\end{prop}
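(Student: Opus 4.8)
The plan is to prove the statement by induction on $\ell(n)$, using the self-similarity of $P$ recorded in the earlier lemmas. Throughout I will rely on the identity from Note~\ref{note1} that $a(\go)=S_\go(\frac 12,\frac{\sqrt 3}{6})$; consequently, for $i\in I$ and $\go\in I^k$ one has $S_i(a(\go))=S_{i\go}(\frac 12,\frac{\sqrt 3}{6})=a(i\go)$, and this relation is exactly what will let me reassemble the optimal configurations on the three sub-triangles into the global one. For the base case $\ell(n)=1$, i.e.\ $n=3$, Corollary~\ref{cor21} already supplies that $\set{a(1),a(2),a(3)}$ is the unique optimal set of three-means with $V_3=\frac1{54}=\frac16\cdot\frac1{9}$, which is the claimed formula.

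For the inductive step, assume the statement at level $k$: the unique optimal set of $3^k$-means is $\set{a(\go):\go\in I^k}$ and $V_{3^k}=\frac16\frac1{9^k}$. Let $\ga$ be any optimal set of $n$-means with $n=3^{k+1}$, and put $\gb_i=\ga\ii\tri_i$, $n_i=\te{card}(\gb_i)$. By Lemma~\ref{lemma31} we have $\ga=\mathop{\uu}\limits_{i=1}^3\gb_i$ and $|n_i-n_j|\le 1$ for all $i,j$. Since $n_1+n_2+n_3=3^{k+1}$ is divisible by $3$ while the three integers differ pairwise by at most one, they must all coincide, so $n_1=n_2=n_3=3^k$. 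Lemma~\ref{lemma22} then gives $V_n=\sum_{i=1}^3\frac1{27}V_{n_i}=3\cdot\frac1{27}V_{3^k}=\frac19\cdot\frac16\frac1{9^k}=\frac16\frac1{9^{k+1}}$, which establishes the error formula at level $k+1$.

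It remains to show uniqueness. By Lemma~\ref{lemma22} each $S_i^{-1}(\gb_i)$ is an optimal set of $3^k$-means, so the inductive hypothesis forces $S_i^{-1}(\gb_i)=\set{a(\go):\go\in I^k}$. Applying $S_i$ together with the identity $S_i(a(\go))=a(i\go)$ yields $\gb_i=\set{a(i\go):\go\in I^k}$, and taking the union over $i\in I$ gives $\ga=\set{a(\gt):\gt\in I^{k+1}}$, precisely the asserted set. Hence the optimal set is unique, and by the Principle of Mathematical Induction the proposition follows.

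I expect the only genuinely delicate point to be the forcing of the equal split $n_1=n_2=n_3$; everything else is a mechanical application of Lemmas~\ref{lemma31} and~\ref{lemma22}. The elementary combinatorial observation that three integers differing pairwise by at most one and summing to a multiple of three must all be equal is what collapses the many a priori admissible distributions of the $n$ points among $\tri_1,\tri_2,\tri_3$ down to the single balanced one, and it is exactly the divisibility of $3^{k+1}$ by $3$ that makes both the error computation and the uniqueness go through without any case analysis.
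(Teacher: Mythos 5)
Your proof is correct and follows essentially the same route as the paper's: induction on $\ell(n)$ with Corollary~\ref{cor21} as the base case, Lemma~\ref{lemma31} and Lemma~\ref{lemma22} driving the inductive step, and uniqueness forced by the inductive hypothesis applied to each $S_i^{-1}(\gb_i)$. The one small improvement is that you explicitly justify the equal split $n_1=n_2=n_3=3^k$ (pairwise differences at most one plus divisibility by $3$), a step the paper simply asserts with ``Note that $\te{card}(\gb_i)=3^k$.''
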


\begin{proof} 
By Corollary~\ref{cor21}, the assertion is true if $\ell(n)=1$. Let us assume that it is true for $n=3^k$ for some positive integer $k$. We now show that it is also true if $n=3^{k+1}$. Let $\gb$ be an optimal set of $3^{k+1}$-means. Set $\gb_i:=\gb \ii \tri_i$ for $1\leq i\leq 3$. Notice that $\te{card}(\gb_i)=3^k$. Then, by Lemma~\ref{lemma30} and Lemma~\ref{lemma31} and Lemma~\ref{lemma22}, $S_i^{-1}(\gb_i)$ is an optimal set of $3^k$-means, and so
$S_i^{-1}(\gb_i)=\set{a(\go) : \go \in I^k}$ which implies $\gb_i=\set{a(i\go) : \go \in I^k}$. Thus, $\gb=\gb_1\uu \gb_2\uu\gb_3=\set{a(\go) : \go \in I^{k+1}}$ is an optimal set of $3^{k+1}$-means. Since $a(\go)$ is the centroid of $\tri_\go$ for each $\go\in I^{k+1}$, the set $\gb$ is unique. Now, by Lemma~\ref{lemma22}, we have the quantization error as
\[V_{3^{k+1}}=\mathop{\sum}\limits_{i=1}^3 \frac 1{27}V_{3^k}=\frac 1 9 \cdot \frac 16\cdot \frac1{9^k}=\frac 16 \frac{1}{9^{k+1}}. \]
Thus, by the Principle of Mathematical Induction, the proof of the proposition is complete.
\end{proof}

\begin{prop} \label{prop25}
Let $3^{\ell(n)}<n\leq  2 \cdot 3^{\ell(n)}$ for some positive integer $\ell(n)$. Choose $J\sci I^{\ell(n)}$ with $\te{card}(J)=n-3^{\ell(n)}$, and then the set
\[\ga_n(J):=\set{a(\go) : \go \in I^{\ell(n)}\setminus J}\uu\mathop{\uu}\limits_{\go \in J} S_\go(\ga_2) \]
is an optimal set of $n$-means for the measure  $P.$
\end{prop}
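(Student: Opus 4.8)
The plan is to induct on $\ell:=\ell(n)$, feeding the recursive structure of Lemma~\ref{lemma30}, Lemma~\ref{lemma31} and Lemma~\ref{lemma22}. Put $V_1=\frac16$ and $V_2=\frac5{54}$, and for an integer $m$ with $3^{j}\le m\le 2\cdot 3^{j}$ define
\[ \Phi_j(m):=\frac{1}{27^{j}}\big((2\cdot 3^{j}-m)\,V_1+(m-3^{j})\,V_2\big). \]
At the left endpoint $\Phi_j(3^{j})=\frac{1}{27^{j}}\,3^{j}V_1=\frac16\frac{1}{9^{j}}$, which is exactly the value $V_{3^{j}}$ from Proposition~\ref{prop24}. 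I will show that $V_n=\Phi_\ell(n)$ and that every set $\ga_n(J)$ in the statement has distortion precisely $\Phi_\ell(n)$; granting both, each $\ga_n(J)$ is an $n$-point competitor whose error equals $V_n$, hence is optimal.

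\emph{The value $V_n$.} Let $\ga$ be optimal, and set $\gb_i:=\ga\ii\tri_i$, $n_i:=\te{card}(\gb_i)$. By Lemma~\ref{lemma30}, Lemma~\ref{lemma31} and Lemma~\ref{lemma22}, $n=n_1+n_2+n_3$, $|n_i-n_j|\le 1$, each $S_i^{-1}(\gb_i)$ is an optimal set of $n_i$-means, and $V_n=\frac1{27}(V_{n_1}+V_{n_2}+V_{n_3})$. Since $3^{\ell}<n\le 2\cdot 3^{\ell}$, the near-equal splitting forced by $|n_i-n_j|\le 1$ puts $3^{\ell-1}\le n_i\le 2\cdot 3^{\ell-1}$ for each $i$. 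This interval is precisely the range of the induction hypothesis together with the endpoint $3^{\ell-1}$ handled by Proposition~\ref{prop24}: if $\ell=1$ then $n_i\in\set{1,2}$ and $V_{n_i}=\Phi_0(n_i)$ from Lemma~\ref{lemma333} and Proposition~\ref{prop1}; if $\ell\ge2$ then either $n_i=3^{\ell-1}$, giving $V_{n_i}=\Phi_{\ell-1}(n_i)$ by Proposition~\ref{prop24}, or $3^{\ell-1}<n_i\le 2\cdot 3^{\ell-1}$, giving $\ell(n_i)=\ell-1$ and $V_{n_i}=\Phi_{\ell-1}(n_i)$ by the induction hypothesis. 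Using $\sum_i(2\cdot 3^{\ell-1}-n_i)=2\cdot 3^{\ell}-n$ and $\sum_i(n_i-3^{\ell-1})=n-3^{\ell}$, we obtain
\[ V_n=\frac1{27}\sum_{i=1}^3\Phi_{\ell-1}(n_i)=\frac{1}{27^{\ell}}\big((2\cdot 3^{\ell}-n)V_1+(n-3^{\ell})V_2\big)=\Phi_\ell(n). \]

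\emph{Attainment.} Fix any $J\ci I^{\ell}$ with $\te{card}(J)=n-3^{\ell}$. For $\go\in I^{\ell}\setminus J$ the point $a(\go)\in\ga_n(J)$ lies in $\tri_\go$, so by \eqref{eq1} (with $(a,b)=a(\go)$) we have $\int_{\tri_\go}\|x-a(\go)\|^2\,dP=\frac{1}{27^{\ell}}V_1$; for $\go\in J$ the two points $S_\go(\ga_2)$ lie in $\tri_\go$, and the self-similar scaling used in Lemma~\ref{lemma22} gives $\int_{\tri_\go}\min_{a\in S_\go(\ga_2)}\|x-a\|^2\,dP=\frac{1}{27^{\ell}}V_2$. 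Bounding the minimum over all of $\ga_n(J)$ on each $\tri_\go$ by the minimum over only the points assigned to $\tri_\go$, we get
\[ \int\min_{a\in\ga_n(J)}\|x-a\|^2\,dP\le\sum_{\go\in I^{\ell}\setminus J}\frac{V_1}{27^{\ell}}+\sum_{\go\in J}\frac{V_2}{27^{\ell}}=\Phi_\ell(n)=V_n. \]
Since $V_n$ is a lower bound for the error of any $n$-point set, equality holds and $\ga_n(J)$ is optimal. The point that needs care is that this must hold for \emph{every} admissible $J$: the bound depends on $J$ only through $\te{card}(J)$, which is the manifestation of the affinity of $\Phi_j$ in its argument throughout $3^{j}\le m\le 2\cdot 3^{j}$ (endpoint included). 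This affinity is what makes $\frac1{27}\sum_i V_{n_i}$ insensitive to how the $n-3^{\ell}$ ``doubled'' triangles are apportioned among $\tri_1,\tri_2,\tri_3$, and hence what lets an arbitrary $J$---not merely a balanced one---realize the optimum $V_n$.
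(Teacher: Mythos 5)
Your proof is correct, and it is organized along a genuinely different line than the paper's, even though both arguments rest on the same three pillars (Lemma~\ref{lemma30}, Lemma~\ref{lemma31}, Lemma~\ref{lemma22}) and both proceed by induction on $\ell(n)$. The paper inducts on the \emph{structure} of optimal sets: it splits an optimal $\gb$ as $\gb_i=\gb\ii\tri_i$, invokes Lemma~\ref{lemma22} to make each $S_i^{-1}(\gb_i)$ optimal for $n_i$ points, uses the inductive hypothesis to write each $S_i^{-1}(\gb_i)$ in the prescribed form, and reassembles; no error value appears until Theorem~\ref{Th1}. You instead induct only on the \emph{value}: the recursion $V_n=\frac{1}{27}(V_{n_1}+V_{n_2}+V_{n_3})$ with the balanced $n_i$ forced by Lemma~\ref{lemma31} (and the endpoint case $n_i=3^{\ell-1}$ supplied by Proposition~\ref{prop24}) gives $V_n=\Phi_\ell(n)$, and then a direct, non-inductive scaling computation shows that every $\ga_n(J)$ attains exactly $\Phi_\ell(n)$, hence is optimal. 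Your route buys two things. First, the closed form $V_n=\frac{1}{2\cdot 27^{\ell+1}}\,(13\cdot 3^{\ell}-4n)$ (your $\Phi_\ell(n)$) comes out as a byproduct rather than being deferred to Theorem~\ref{Th1}. Second, and more significantly, it disposes of the quantifier over $J$ cleanly: since the attained distortion depends on $J$ only through $\te{card}(J)$, \emph{every} admissible $J$ is optimal. The paper's induction, read literally, only exhibits the prescribed form for the particular $J$ produced by decomposing one optimal set, and its appeal to the inductive hypothesis to ``write'' $S_i^{-1}(\gb_i)$ in that form actually requires the converse of the stated proposition (that all optimal sets of those sizes have that form); your version avoids this subtlety because your inductive hypothesis concerns the number $V_{n_i}$, not the shape of optimal sets. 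What the paper's structural route retains in exchange is a characterization of optimal sets, which is what feeds the count of optimal sets in Theorem~\ref{Th1}; your argument proves optimality of the sets $\ga_n(J)$ but not that they exhaust all optimal sets.
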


\begin{proof} 
If
$3<n\leq 2\cdot3$, the proposition can be proved by proceeding as in Lemma~\ref{lemma11}; hence, it is true if $\ell(n)=1.$
Let the proposition be true if $\ell(n)=m$ for some positive integer $m$.
Let $\gb$ be an optimal set of $n$-means where $n=3^{m+1}+k$ and $1\leq k\leq 3^{m+1}$. Let $J\sci I^{m+1}$ be such that $\te{card}(J)=k$ for $1\leq k\leq 3^{m+1}$.
 Set $\gb_i:=\gb \ii \tri_i$ for $1\leq i\leq 3$. Then, $\gb=\mathop{\uu}\limits_{i=1}^3 \gb_i$ and $\te{card}(\gb_i)=3^{m}+k_i$, where $k_i:=\te{card}(\set{\go \in J : a(\go) \in \tri_i\ii \gb_i})$ for $1\leq i\leq 3$. Notice that $0\leq k_i\leq 3^m$ and $k=k_1+k_2+k_3$. By Lemma~\ref{lemma22}, $S_i^{-1}(\gb_i)$ is an optimal sets of $(3^{m}+k_i)$-means, and so we can write
\[S_i^{-1}(\gb_i)=\set{a(\go) : \go \in I^{m}\setminus J_i}\uu \mathop{\uu}\limits_{\go \in J_i} S_\go(\ga_2) \]
where $J_i \sci I^{m}$ with $\te{card}(J_i)=k_i$. Notice that if $\te{card}(J_i)=0$ then the set $\mathop{\uu}\limits_{\go\in J_i} S_\go(\ga_2)$ is an empty set. Thus, we have
$ \gb_i=\set{a(i\go) : \go \in I^{m}\setminus J_i}\uu \mathop{\uu}\limits_{\go \in J_i} S_{i\go}(\ga_2). $
Hence, $\gb=\gb_1\uu\gb_2\uu\gb_3=\set{a(\go) : \go \in I^{m+1}\setminus J}\uu \mathop{\uu}\limits_{\go \in J} S_\go(\ga_2) $
is an optimal set of $n$-means for $n=3^{m+1}+k$. Therefore, by the Principle of Mathematical Induction, the proposition is true.
\end{proof}

\begin{prop}\label{prop26}  Let $n\in \D N$ be such that $2 \cdot 3^{\ell(n)}<n< 3^{\ell(n)+1}$. Choose $J \sci I^{\ell(n)}$ with $\te{card}(J)=n-2 \cdot 3^{\ell(n)}$, and then the set
\[\ga_n(J):=\mathop{\uu}\limits_{\go\in J} S_\go(\ga_3)\uu \mathop{\uu}\limits_{\go \in I^{\ell(n)}\setminus J} S_\go(\ga_2)\]
is an optimal set of $n$-means for the measure  $P$.
\end{prop}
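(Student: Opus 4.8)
The plan is to prove the statement by induction on $\ell(n)$, in exactly the spirit of the proofs of Proposition~\ref{prop24} and Proposition~\ref{prop25}. For the base case $\ell(n)=1$ we have $6<n<9$, i.e. $n\in\set{7,8}$; here any optimal set $\gb$ splits as $\gb=\mathop{\uu}\limits_{i=1}^3\gb_i$ with $\gb_i=\gb\ii\tri_i$ by Lemma~\ref{lemma30} and Lemma~\ref{lemma31}, the cardinalities $n_i:=\te{card}(\gb_i)$ take values in $\set{2,3}$, and Lemma~\ref{lemma22} together with Corollary~\ref{cor21} (when $n_i=3$) and Proposition~\ref{prop1} (when $n_i=2$) identify $S_i^{-1}(\gb_i)$ with $\ga_3$ or $\ga_2$. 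Pushing these forward by $S_i$ assembles $\gb$ into the required form $\ga_n(J)$.

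For the inductive step I would assume the claim for $\ell(n)=m$ and let $\gb$ be an optimal set of $n$-means with $2\cdot 3^{m+1}<n<3^{m+2}$. By Lemma~\ref{lemma30} and Lemma~\ref{lemma31}, $\gb=\mathop{\uu}\limits_{i=1}^3\gb_i$ with $\gb_i=\gb\ii\tri_i$, $n=n_1+n_2+n_3$, and $|n_i-n_j|\leq 1$. Dividing the hypothesis on $n$ by $3$ gives $2\cdot 3^{m}<n/3<3^{m+1}$, so each $n_i$, being either $\lfloor n/3\rfloor$ or $\lceil n/3\rceil$ by the balance condition, satisfies $2\cdot 3^{m}\leq n_i\leq 3^{m+1}$. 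Set $k_i:=n_i-2\cdot 3^{m}\in\set{0,1,\dots,3^{m}}$, so $k_1+k_2+k_3=n-2\cdot 3^{m+1}$. By Lemma~\ref{lemma22}, $S_i^{-1}(\gb_i)$ is an optimal set of $n_i$-means, and the crux is that, whichever of the three sub-ranges $n_i$ lands in, this optimal set admits the uniform description
\[S_i^{-1}(\gb_i)=\mathop{\uu}\limits_{\go\in J_i}S_\go(\ga_3)\uu\mathop{\uu}\limits_{\go\in I^{m}\setminus J_i}S_\go(\ga_2),\qquad J_i\ci I^{m},\ \te{card}(J_i)=k_i.\]

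The three sub-ranges are reconciled as follows. If $2\cdot 3^{m}<n_i<3^{m+1}$ this is precisely the inductive hypothesis (Proposition~\ref{prop26} at level $m$). The endpoint $k_i=0$, i.e. $n_i=2\cdot 3^{m}$, is the top of the range of Proposition~\ref{prop25}, which there forces every level-$m$ cell to carry $\ga_2$, hence $J_i=\es$ above. The endpoint $k_i=3^{m}$, i.e. $n_i=3^{m+1}$, is governed by Proposition~\ref{prop24}, whose uniqueness gives $S_i^{-1}(\gb_i)=\set{a(\go):\go\in I^{m+1}}=\mathop{\uu}\limits_{\go\in I^m}S_\go(\ga_3)$, i.e. $J_i=I^{m}$. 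Thus the display holds in every case. Applying $S_i$ gives $\gb_i=\mathop{\uu}\limits_{\go\in J_i}S_{i\go}(\ga_3)\uu\mathop{\uu}\limits_{\go\in I^{m}\setminus J_i}S_{i\go}(\ga_2)$, and putting $J:=\set{i\go:1\leq i\leq 3,\ \go\in J_i}\ci I^{m+1}$, with $\te{card}(J)=k_1+k_2+k_3=n-2\cdot 3^{m+1}$, yields $\gb=\mathop{\uu}\limits_{\go\in J}S_\go(\ga_3)\uu\mathop{\uu}\limits_{\go\in I^{m+1}\setminus J}S_\go(\ga_2)=\ga_n(J)$, completing the induction.

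The step I expect to be the main obstacle is the bookkeeping at the two endpoints of the range: one must confirm that $n_i$ is genuinely confined to $[2\cdot 3^{m},3^{m+1}]$ (this needs \emph{both} halves of the hypothesis on $n$ together with Lemma~\ref{lemma31}), and then verify that the top endpoint of Proposition~\ref{prop25} and the power-of-three case of Proposition~\ref{prop24} both collapse into the single formula indexed by $k_i=n_i-2\cdot 3^{m}$, so that the three cases merge into one index set $J$. A secondary point worth stating explicitly is that the $n_i$, and hence the distribution of $J$ among the first-level cells $\tri_1,\tri_2,\tri_3$, are forced to be balanced by Lemma~\ref{lemma31}; this is what guarantees that the $J$ produced by the argument is admissible and that $\ga_n(J)$ attains the optimal error.
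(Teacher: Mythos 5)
Your proposal is correct and takes essentially the same route as the paper's own proof: induction on $\ell(n)$, with Lemma~\ref{lemma30}, Lemma~\ref{lemma31} and Lemma~\ref{lemma22} used to split an optimal set into its three first-level pieces $\gb_i=\gb\ii\tri_i$, which are then identified through the level-$m$ results and reassembled into the form $\ga_n(J)$. The only difference is one of completeness: the paper compresses the inductive step into ``proceeding as in the proof of Proposition~\ref{prop25}'', whereas you spell out the bookkeeping it leaves implicit, namely the balance bound $2\cdot 3^m\leq n_i\leq 3^{m+1}$ forced by Lemma~\ref{lemma31} and the reconciliation of the endpoint cases $k_i=0$ and $k_i=3^m$ via Proposition~\ref{prop25} and Proposition~\ref{prop24} respectively.
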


\begin{proof} Let $n=2 \cdot 3^{\ell(n)}+k$ where $1\leq k< 3^{\ell(n)}$. Let $\gb$ be an optimal set of $n$-means. Write $\gb_i:=\gb\ii \tri_i$ for $1\leq i\leq 3$. First take $\ell(n)=1$, then if $k=1$, by Lemma~\ref{lemma30},  Lemma~\ref{lemma31}, and Lemma~\ref{lemma22}, we can assume that both $S_1^{-1}(\gb_1)$ and $S_2^{-1}(\gb_2)$ are optimal sets of two-means, and $S_3^{-1}(\gb_3)$ is an optimal set of three-means, which yields $\gb=\gb_1\uu \gb_2\uu\gb_3=S_1(\ga_2)\uu S_2(\ga_2)\uu S_3(\ga_3)$, i.e., $\ga_7(\set{3})=S_1(\ga_2)\uu S_2(\ga_2)\uu S_3(\ga_3)$. Thus, the proposition is true if $\ell(n)=1$ and $k=1$. Similarly, we can prove that the proposition is true if $\ell(n)=1$ and $1\leq k<3^{\ell(n)}$. Let us now assume that the proposition is true if $\ell(n)=m$ for some positive integer $m$, where $1\leq k<3^{m}$. Now proceeding as in the proof of Proposition~\ref{prop25}, it can be shown that the proposition is also true for $\ell(n)=m+1$. Therefore, by the Principle of Mathematical Induction, the proposition follows.
\end{proof}

The following theorem which gives all the optimal sets of $n$-means and their numbers, and the corresponding quantization error for all $n\geq 3$.

\begin{theorem}\label{Th1}
For $n\in \D N$ with $n\geq 3$, let $\ell(n)$ be the unique natural number with $3^{\ell(n)}\leq n<3^{\ell(n)+1}$, and $\ga_n$ be an optimal set of $n$-means. If $n=3^{\ell(n)}$, then the set
$\ga_{3^{\ell(n)}}:=\set{ a(\go) : \go \in I^{\ell(n)}}$ is a unique optimal set of $n$-means for $P$.
If $3^{\ell(n)}<n\leq  2\cdot 3^{\ell(n)}$, then the set
$\ga_n(J)=\set{a(\go) : \go \in I^{\ell(n)}\setminus J}\uu\mathop{\uu}\limits_{\go \in J} S_\go(\ga_2)$, where $J \sci I^{\ell(n)}$ with $\te{card}(J)=n-3^{\ell(n)}$,
is an optimal set of $n$-means, and the number of such sets is $^{3^{\ell(n)}}C_{n-3^{\ell(n)}} 3^{n-3^{\ell(n)}}$. On the other hand,
if $2\cdot 3^{\ell(n)}<n< 3^{\ell(n)+1}$, then the set
$\ga_n(J)=\mathop{\uu}\limits_{\go\in J} S_\go(\ga_3)\uu \mathop{\uu}\limits_{\go \in I^{\ell(n)}\setminus J} S_\go(\ga_2)$, where $J \sci I^{\ell(n)}$ with $\te{card}(J)=n-2\cdot 3^{\ell(n)}$,
is an optimal set of $n$-means, and the number of such sets is $^{3^{\ell(n)}}C_{n-2\cdot 3^{\ell(n)}} 3^{ 3^{\ell(n)+1}-n}$. The quantization error is given by
$V_n=
\frac 1 2 \cdot \frac 1{27^{\ell(n)+1}} (13\cdot 3^{\ell(n)}-4n).$

\end{theorem}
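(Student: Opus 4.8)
The plan is to assemble the theorem from the three structural propositions already in hand and then carry out two bookkeeping tasks: counting the optimal sets and collapsing the recursion into a single closed form. The description of $\ga_n$ in the three ranges $n=3^{\ell(n)}$, $\ 3^{\ell(n)}<n\le 2\cdot 3^{\ell(n)}$, and $\ 2\cdot 3^{\ell(n)}<n<3^{\ell(n)+1}$ is exactly the content of Proposition~\ref{prop24}, Proposition~\ref{prop25}, and Proposition~\ref{prop26} respectively, so the first move is simply to invoke these with $\ell=\ell(n)$ to obtain the stated form. What remains is to count how many such sets occur and to evaluate $V_n$.

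For the count I would argue that in each range an optimal set is determined by (i) the choice of the index set $J\ci I^{\ell(n)}$ and (ii) an independent choice, on each level-$\ell(n)$ triangle $\tri_\go$, of one of the base optimal configurations placed there. A single centroid $a(\go)$ is unique, the optimal set of three-means is unique by Corollary~\ref{cor21}, and by the Remark following Proposition~\ref{prop1} there are exactly three optimal sets of two-means. In the middle range the triangles indexed by $J$ each carry a two-means set (a factor $3$) while the remaining $2\cdot 3^{\ell(n)}-n$ triangles carry a unique centroid, giving $\binom{3^{\ell(n)}}{\,n-3^{\ell(n)}}\,3^{\,n-3^{\ell(n)}}$; in the upper range the $3^{\ell(n)+1}-n$ triangles outside $J$ each carry a two-means set and those in $J$ the unique three-means set, giving $\binom{3^{\ell(n)}}{\,n-2\cdot 3^{\ell(n)}}\,3^{\,3^{\ell(n)+1}-n}$. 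Because the configurations occupy pairwise disjoint triangles, distinct parameter choices yield distinct point sets, so there is no overcounting.

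For the quantization error I would exploit self-similar scaling: by Lemma~\ref{lemma1} and the similarity ratio $3^{-\ell(n)}$ of $S_\go$ (this is the computation underlying Lemma~\ref{lemma22}), a level-$\ell(n)$ triangle carrying an optimal $k$-means configuration contributes exactly $27^{-\ell(n)}V_k$ to $V_n$, where $V_1=\tfrac16$, $V_2=\tfrac{5}{54}$, $V_3=\tfrac1{54}$ by Lemma~\ref{lemma333}, Proposition~\ref{prop1}, and Corollary~\ref{cor21}. Summing over the level-$\ell(n)$ triangles in each range — the $(2\cdot 3^{\ell(n)}-n)$ singletons and $(n-3^{\ell(n)})$ two-point sets in the middle range, and the $(3^{\ell(n)+1}-n)$ two-point sets and $(n-2\cdot 3^{\ell(n)})$ three-point sets in the upper range — and simplifying yields $\tfrac12\cdot 27^{-(\ell(n)+1)}(13\cdot 3^{\ell(n)}-4n)$ in both ranges; the pleasant point is that the very same expression reproduces the value $\tfrac16\cdot 9^{-\ell(n)}$ at $n=3^{\ell(n)}$ from Proposition~\ref{prop24}, so the single formula is valid for all $n\ge 3$.

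The genuinely substantive work has already been done in the preceding propositions, whose inductive proofs rest on the balancing Lemma~\ref{lemma31} and the recursion Lemma~\ref{lemma22}; relative to those, the delicate issue here is the counting. The main obstacle is confirming completeness and avoiding overcounting: that the propositions characterize \emph{all} optimal sets rather than merely exhibiting some, that the multiplicity of optimal two-means is exactly three and not merely at least three, and that no two parameter choices collapse to the same set. Once these combinatorial points are settled, the error evaluation is routine arithmetic.
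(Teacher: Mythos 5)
Your proposal is correct and follows essentially the same route as the paper's own proof: it assembles the structure of $\ga_n$ from Propositions~\ref{prop24}, \ref{prop25}, and \ref{prop26}, counts optimal sets by the choice of $J$ together with the three choices of $S_\go(\ga_2)$ per triangle in $J$ (and uniqueness of $\ga_3$ and of centroids), and evaluates $V_n$ by the self-similar scaling $27^{-\ell(n)}V_k$ with $V_1=\frac16$, $V_2=\frac5{54}$, $V_3=\frac1{54}$. The delicate points you flag (completeness of the characterization and that there are exactly, not merely at least, three optimal sets of two-means) are treated no more explicitly in the paper than in your sketch, so nothing is missing relative to the published argument.
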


\begin{proof} Let us first assume that $n=3^{\ell(n)}$. Then, by Proposition~\ref{prop24}, $\ga_{3^{\ell(n)}}:=\set{ a(\go) : \go \in I^{\ell(n)}}$ is a unique optimal set of $n$-means for $P$ with quantization error
\begin{align*}V_n&= \sum_{\go  \in I^{\ell(n)}}\frac 1{3^{\ell(n)}} \int\|x-a(\go)\|^2  d(P\circ S_\go^{-1})=\sum_{\go  \in I^{\ell(n)}}\frac 1{3^{\ell(n)}}\int\|S_\go(x)-a(\go)\|^2  dP \\
& =\sum_{\go  \in I^{\ell(n)}}\frac 1{3^{\ell(n)}}\frac 1{9^{\ell(n)}} V=\frac 1 6 \frac1{9^{\ell(n)}}=\frac 1 2 \cdot \frac 1{27^{\ell(n)+1}} (13\cdot 3^{\ell(n)}-4n).
\end{align*}
Let us now assume that $3^{\ell(n)}<n\leq  2 \cdot 3^{\ell(n)}$. Then, by Proposition~\ref{prop25},
$\ga_n(J):=\set{a(\go) : \go \in I^{\ell(n)}\setminus J}\uu\mathop{\uu}\limits_{\go \in J} S_\go(\ga_2)$, where $J \sci I^{\ell(n)}$ with $\te{card}(J)=n-3^{\ell(n)}$, is an optimal set of $n$-means. Since the set $J$ from $I^{\ell(n)}$ can be chosen in $^{3^{\ell(n)}}C_{n-3^{\ell(n)}}$ ways and for each $\go\in J$ the set $S_\go(\ga_2)$ can be chosen in three different ways, the number of optimal sets of $n$-means in this case is given by $^{3^{\ell(n)}}C_{n-3^{\ell(n)}} 3^{n-3^{\ell(n)}}$.  The quantization error is
\begin{align*}
& V_n=\int \min_{a\in \ga_n(J)}\|x-a\|^2 dP=\sum_{\go \in I^{\ell(n)}\setminus J}\mathop{\int}\limits_{\tri_\go}  \min_{a\in \ga_n(J)}\|x-a\|^2 dP+\sum_{\go \in J}\mathop{\int}\limits_{\tri_\go}  \min_{a\in \ga_n(J)}\|x-a\|^2 dP\\
&=\sum_{\go \in I^{\ell(n)}\setminus J}\mathop{\int}\limits_{\tri_\go}  \|x-a(\go)\|^2 dP+\sum_{\go \in J}\mathop{\int}\limits_{\tri_\go}  \min_{a\in S_\go(\ga_2)}\|x-a\|^2 dP\\
&=\sum_{\go \in I^{\ell(n)}\setminus J}\frac 1{3^{\ell(n)}} \mathop{\int} \|x-a(\go)\|^2 dP\circ S_\go^{-1}+\sum_{\go \in J}\frac 1{3^{\ell(n)}} \mathop{\int}\min_{a\in S_\go(\ga_2)}\|x-a\|^2 dP\circ S_\go^{-1}\\
&=\sum_{\go \in I^{\ell(n)}\setminus J}\frac 1{3^{\ell(n)}}\frac 1{9^{\ell(n)}} V_1+\sum_{\go \in J}\frac 1{3^{\ell(n)}} \frac 1{9^{\ell(n)}} V_2=\sum_{\go \in I^{\ell(n)}\setminus J}\frac 1{3^{\ell(n)}}\frac 1{9^{\ell(n)}} V+\sum_{\go \in J}\frac 1{3^{\ell(n)}} \frac 1{9^{\ell(n)}} \frac 5 9 V\\
&=\frac 1 6 \cdot \frac 1{27^{\ell(n)}}\Big(\te{card}(I^{\ell(n)}\setminus J)+\frac 59 \te{card}(J)\Big) =\frac 1 6 \cdot \frac 1{27^{\ell(n)}}\Big(2\cdot 3^{\ell(n)}-n +\frac 59 (n-3^{\ell(n)})\Big)\\
&=\frac 1 2 \cdot \frac 1{27^{\ell(n)+1}} (13\cdot 3^{\ell(n)}-4n).
\end{align*}
Let us now assume that $2 \cdot 3^{\ell(n)}<n< 3^{\ell(n)+1}$. Then, by Proposition~\ref{prop26},
$\ga_n(J)=\mathop{\uu}\limits_{\go\in J} S_\go(\ga_3)\uu \mathop{\uu}\limits_{\go \in I^{\ell(n)}\setminus J} S_\go(\ga_2)$, where $J \sci I^{\ell(n)}$ with $\te{card}(J)=n-2\cdot 3^{\ell(n)}$, is an optimal set of $n$-means.
Since the set $J$ from $I^{\ell(n)}$ can be chosen in $^{3^{\ell(n)}}C_{n-2\cdot 3^{\ell(n)}}$ ways and for each $\go\in J$ the set $S_\go(\ga_2)$ can be chosen in three different ways, the number of optimal sets of $n$-means is $^{3^{\ell(n)}}C_{n-2\cdot 3^{\ell(n)}} 3^{ 3^{\ell(n)+1}-n}$, where  $\te{card}(I^{\ell(n)}\setminus J)=3^{\ell(n)}-(n-2\cdot 3^{\ell(n)})=3^{\ell(n)+1}-n$, and the quantization error is
\begin{align*}
& V_n=\sum_{\go \in J}\mathop{\int}\limits_{\tri_\go}  \min_{a\in S_\go(\ga_3)}\|x-a\|^2 dP+\sum_{\go \in I^{\ell(n)}\setminus J}\mathop{\int}\limits_{\tri_\go}  \min_{a\in S_\go(\ga_2)}\|x-a\|^2 dP\\
&=\sum_{\go \in J}\frac 1{3^{\ell(n)}} \mathop{\int} \min_{a\in S_\go(\ga_3)}\|x-a\|^2 dP\circ S_\go^{-1}+\sum_{\go \in I^{\ell(n)}\setminus J}\frac 1{3^{\ell(n)}}\mathop{\int} \min_{a\in S_\go(\ga_2)}\|x-a\|^2 dP\circ S_\go^{-1}\\
&=\sum_{\go \in J}\frac 1{3^{\ell(n)}}\frac 1{9^{\ell(n)}} V_3+\sum_{\go \in I^{\ell(n)}\setminus J}\frac 1{3^{\ell(n)}} \frac 1{9^{\ell(n)}} V_2=\sum_{\go \in J}\frac 1{3^{\ell(n)}}\frac 1{9^{\ell(n)}} \frac 1 9V+\sum_{\go \in I^{\ell(n)}\setminus J}\frac 1{3^{\ell(n)}} \frac 1{9^{\ell(n)}} \frac5 9 V\\
&=\frac 1 2 \cdot \frac 1{27^{\ell(n)+1}}\Big(\te{card}(J)+ 5\,\te{card}(I^{\ell(n)}\setminus J)\Big) =\frac 1 2 \cdot \frac 1{27^{\ell(n)+1}}\Big(n-2 \cdot 3^{\ell(n)}+5(3^{\ell(n)+1}-n)\Big)\\
&=\frac 1 2 \cdot \frac 1{27^{\ell(n)+1}} (13\cdot 3^{\ell(n)}-4n).
\end{align*}
Hence, the proof of the theorem is complete.
\end{proof}

Below, following the results obtained above, we would like to demonstrate how to obtain optimal set of $n$-means by two examples.

\begin{example}
Let $n=11=3^2+2$. Take $J=\set{11, 12}$, where $J\sci I^2$ with $\te{card}(J)=2$. Take $\ga_2=\set{a(1, 2), a(3)}$. Then, by Theorem~\ref{Th1},
\begin{align*}
\ga_{11}(J)&=\set{a(\go) : \go \in I^2\setminus J}\uu \mathop{\uu}\limits_{\go\in J} S_\go(\ga_2)\\
&=\set{a(1,3), a(2,1), a(2,2), a(2,3), a(3,1), a(3,2), a(3, 3)}\\
& \qquad \qquad \uu \set{S_{11}(a(1, 2)), S_{11}(a(3)), S_{12}(a(1,2)), S_{12}(a(3))}\\
&=\set{a(1,3), a(2,1), a(2,2), a(2,3), a(3,1), a(3,2), a(3, 3), \\
& \qquad \qquad a(111, 112), a(113), a(121, 122), a(123)}.
\end{align*}
Using equation~\eqref{eq1}, we obtain the distortion error as
\begin{align*} &\int \min_{a\in \ga_{11}(J)} \|x-a\|^2 dP\\
& =7 \mathop{\int}\limits_{\tri_{13}}(x-a(13))^2 dP+2 \mathop{\int}\limits_{\tri_{113}}(x-a(113))^2 dP+2 \mathop{\int}\limits_{\tri_{111}\uu \tri{112}}(x-a(111, 112))^2 dP=\frac{73}{39366}.
\end{align*}
Now substituting $\ell(n)=2$ and $n=11$ in the formula given by Theorem~\ref{Th1}, we also obtain that
\[V_{11}=\frac 12\cdot \frac{1}{27^3} (13 \cdot 3^2-4\cdot 11)=\frac{73}{39366}.\]
\end{example}

\medskip
\begin{example}
Let $n=19=2\cdot 3^2+1$. Take $J=\set{11}$, where $J\sci I^2$ with $\te{card}(J)=1$. Take $\ga_2=\set{a(1, 2), a(3)}$. Notice that  $\ga_3=\set{a(1), a(2), a(3)}$ which is unique. Then, by Theorem~\ref{Th1}, we have
\begin{align*}
&\ga_{19}(J)=\mathop{\uu}\limits_{\go \in I^2\setminus J}S_\go(\ga_2)\uu \mathop{\uu}\limits_{\go\in J} S_\go(\ga_3)\\
&=\set{a(121, 122), a(123), a(131, 132), a(133), a(211, 212), a(213), a(221, 222), a(223), a(231, 232), \\
& a(233), a(311, 312), a(313), a(321, 322), a(323),
a(331, 332), a(333), a(111), a(112), a(113)}.
\end{align*}
Now substituting $\ell(n)=2$ and $n=19$ in the formula given by Theorem~\ref{Th1}, we obtain
\[V_{19}=\frac 12\cdot \frac{1}{27^3} (13 \cdot 3^2-4\cdot 19)=\frac{41}{39366},\]
which can also be obtained by using equation~\eqref{eq1}.
\end{example}

\bigskip

\section{Quantization dimension and quantization coefficient}

Since the stretched Sierpi\`nski triangle under investigation satisfies the strong separation condition, with each $S_i$ having contracting factor of $\frac{1}{3}, $ its Hausdorff dimension is equal to the similarity dimension.  Hence, from the equation $3 (\frac 1 3)^\gb=1, $ we have $\dim_{\te{H}}(S)=\beta =1. $  By Theorem~14.17  \cite{GL2}, the quantization dimension $D(P) $ exists and is equal to $ \beta=1.$ Moreover, using the formula given by Theorem~A \cite{MR}, we see that the Hausdorff dimension and the packing dimension of the measure $P$ are obtained as one. Thus, for the probability measure $P$ with support the stretched Sierpi\`nski triangle, the Hausdorff dimension, the packing dimension, and the quantization dimension coincide.
In the sequel, we show that the $\gb$-dimensional quantization coefficient for $P$ does not exist.

First, observe that if the function $f : [1, 2]\to \D R$ is defined by $f(x)=\frac 1{54} x^2(13-4x)$, then it is strictly increasing on the interval $[1, 2]$, and $f([1, 2])=[\frac {1}{6}, \frac {10}{27}]$.



\begin{theorem} \label{Th4}
 $\gb$-dimensional quantization coefficient for $\gb=1$ does not exist.
\end{theorem}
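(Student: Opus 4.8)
The plan is to feed the closed form for $V_n$ from Theorem~\ref{Th1} into the quantity $n^{2/s}V_n$ with $s=1$, i.e.\ $n^2V_n$, and to observe that it depends on $n$ only through the scale-invariant ratio $x:=n/3^{\ell(n)}\in[1,3)$. Once this is seen, non-existence of the coefficient follows by letting $n$ run along two subsequences on which $x$ takes two different fixed values.

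First I would substitute $n=x\,3^{\ell(n)}$ into the formula $V_n=\frac12\cdot\frac1{27^{\ell(n)+1}}\big(13\cdot 3^{\ell(n)}-4n\big)$. Writing $\ell:=\ell(n)$, one has $13\cdot 3^{\ell}-4n=3^{\ell}(13-4x)$, $n^2=x^2 3^{2\ell}$, and $27^{\ell+1}=27\cdot 3^{3\ell}$, so every power of $3$ cancels and
\[
n^2V_n=\frac{x^2 3^{2\ell}\cdot 3^{\ell}(13-4x)}{2\cdot 27\cdot 3^{3\ell}}=\frac1{54}x^2(13-4x)=f(x),
\]
where $f$ is precisely the function of the preceding lemma. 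Thus $n^2V_n=f\big(n/3^{\ell(n)}\big)$, and the crux of the argument is exactly this reduction: the whole sequence $(n^2V_n)$ is governed by the single function $f$ evaluated at the oscillating ratio $n/3^{\ell(n)}$, which never settles to one value.

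Next I would evaluate along two convenient subsequences. Taking $n=3^{\ell}$ gives $\ell(n)=\ell$ and $x=1$, so $n^2V_n=f(1)=\frac16$ for all such $n$, whence $\liminf_n n^2V_n\le\frac16$. Taking $n=2\cdot 3^{\ell}$ gives $\ell(n)=\ell$ (since $3^{\ell}\le 2\cdot 3^{\ell}<3^{\ell+1}$) and $x=2$, so $n^2V_n=f(2)=\frac{10}{27}$, whence $\limsup_n n^2V_n\ge\frac{10}{27}$. Because $\frac16<\frac{10}{27}$, the liminf and limsup differ, so $\lim_n n^{2/s}V_n$ does not exist for $s=1$; that is, the $1$-dimensional quantization coefficient of the R-measure $P$ does not exist.

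The only genuine work is the algebraic cancellation in the second paragraph; after that the result is immediate from the values $f(1)=\frac16$ and $f(2)=\frac{10}{27}$ supplied by the preceding lemma. The sole point meriting a line of justification is that $\ell(2\cdot 3^{\ell})=\ell$, i.e.\ $3^{\ell}\le 2\cdot 3^{\ell}<3^{\ell+1}$, which is trivial and guarantees that the correct branch of the formula in Theorem~\ref{Th1} is being used for each subsequence.
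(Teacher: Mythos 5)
Your proof is correct, and it hinges on exactly the same identity as the paper's, namely the reduction $n^2V_n=f\bigl(n/3^{\ell(n)}\bigr)$ with $f(x)=\frac1{54}x^2(13-4x)$ (this is equation (4.2) in the paper's proof, derived there in the same way from Theorem~\ref{Th1}). Where you diverge is in what you do with this identity: you evaluate it only along the two subsequences $n=3^\ell$ and $n=2\cdot 3^\ell$, on which the ratio $x$ is \emph{exactly} $1$ and $2$ respectively, so $n^2V_n$ is literally constant ($\frac16$ and $\frac{10}{27}$) and no limiting argument is needed at all --- the liminf and limsup are immediately seen to differ. The paper instead proves the stronger statement that the set of accumulation points of $(n^2V_n)$ is the entire interval $[\frac16,\frac{10}{27}]$: for an arbitrary $y=f(x)$ it takes $n_\ell=\lfloor x3^\ell\rfloor$, uses $x-3^{-\ell}<x_{k_\ell}\le x$ and the continuity of $f$ to get $n_\ell^2V_{n_\ell}\to y$, and then proves the converse inclusion via compactness of $[1,2]$. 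Your argument is the minimal one needed for the stated theorem and avoids the floor-function approximation, the continuity step, and the converse direction; the paper's extra work buys a complete description of the oscillation of $n^2V_n$, which is finer information about the asymptotics than mere non-existence of the limit. One small point of care, which you handled: the single closed formula for $V_n$ in Theorem~\ref{Th1} is valid on the whole range $3^{\ell(n)}\le n<3^{\ell(n)+1}$, so checking $\ell(2\cdot3^\ell)=\ell$ is indeed all that is required to apply it on your second subsequence.
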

\begin{proof} We need to show that $\mathop{\lim}\limits_{n\to \infty} n^2 V_n$ does not exist. Let $(n_k)_{k\in \D N}$ be a subsequence of the set of natural numbers such that $3^{\ell(n_k)}\leq n_k< 3^{\ell(n_k)+1}$. To prove the theorem it is enough to show that the set of accumulation points of the subsequence $(n_k^2 V_{n_k})_{k\geq 1}$ equals $[\frac {1}{6}, \frac {10}{27}]$. Let  $y \in [\frac {1}{6}, \frac {10}{27}]$. We now show that $y$ is a subsequential limit of the sequence $(n_k^2 V_{n_k})_{k\geq 1}$. Since $y\in [\frac {1}{6}, \frac {10}{27}]$, $y=f(x)$ for some $x\in [1, 2]$. Set $n_{k_\ell}=\lfloor x 3^{\ell}\rfloor$, where $\lfloor x 3^{\ell}\rfloor$ denotes the greatest integer less than or equal to $ x 3^{\ell}$. Then, $n_{k_\ell}<n_{k_{\ell+1}}$ and $\ell(n_{k_\ell})=\ell$, and there exists $x_{k_\ell} \in [1, 2]$ such that $n_{k_\ell}=x_{k_\ell} 3^\ell$. Notice that by $\ell(n_{k_\ell})=\ell$ it is meant that $3^\ell\leq n_{k_\ell}<3^{\ell+1}$. Thus, putting the values of $V_{n_{k_\ell}}$ from Theorem~\ref{Th1} we obtain
\begin{align*}
n_{k_\ell}^2V_{n_{k_\ell}}=n_{k_\ell}^2 \frac 1 2 \cdot \frac 1{27^{\ell+1}} (13\cdot 3^{\ell}-4n_{k_\ell}) =x_{k_\ell}^2 9^\ell\frac 1 2 \cdot \frac 1{27^{\ell+1}} (13\cdot 3^{\ell}-4x_{k_\ell} 3^\ell),
\end{align*}
which yields
\begin{align} \label{eq45} n_{k_\ell}^2V_{n_{k_\ell}}=\frac1{54}x_{k_\ell}^2(13-4x_{k_\ell})=f(x_{k_\ell}).\end{align}
Again, $x_{k_\ell} 3^{\ell}\leq x 3^\ell<x_{k_\ell} 3^{\ell}+1$, which implies $x-\frac 1{3^{\ell}}< x_{k_\ell} \leq x$, and so, $\mathop{\lim}\limits_{\ell\to \infty} x_{k_\ell}=x$. Since, $f$ is continuous, we have
\[\mathop{\lim}\limits_{\ell\to \infty}  n_{k_\ell}^2V_{n_{k_\ell}}=f(x)=y,\]
which yields the fact that $y$ is an accumulation point of the subsequence $(n_k^2 V_{n_k})_{k\geq 1}$ whenever $y\in [\frac {1}{6}, \frac {10}{27}]$. To prove the converse, let $y$ be an accumulation point of the subsequence  $(n_k^2 V_{n_k})_{k\geq 1}$. Then, there exists a subsequence  $(n_{k_i}^2 V_{n_{k_i}})_{i\geq 1}$ of  $(n_k^2 V_{n_k})_{k\geq 1}$ such that $\mathop{\lim}\limits_{i\to \infty}n_{k_i}^2 V_{n_{k_i}}=y$. Set $\ell_{k_i}=\ell(n_{k_i})$ and $x_{k_i}=\frac{n_{k_i}}{3^{\ell_{k_i}}}$. Then,  $x_{k_i} \in[1, 2]$, and as shown in \eqref{eq45}, we have
\[n_{k_i}^2 V_{n_{k_i}}=f(x_{k_i}).\]
Let $(x_{k_{i_j}})_{j\geq 1}$ be a convergent subsequence of $(x_{k_i})_{i\geq 1}$, and then we obtain
\[y=\lim_{i\to \infty} n_{k_i}^2 V_{n_{k_i}}=\lim_{j\to \infty}n_{k_{i_j}}^2 V_{n_{k_{i_j}}}=\lim_{j\to \infty}f(x_{k_{i_j}}) \in [\frac {1}{6}, \frac {10}{27}].\]
Thus, we deduce that the set of accumulation points of the subsequence $(n_k^2 V_{n_k})_{k\geq 1}$ is the interval $[\frac {1}{6}, \frac {10}{27}]$; hence, the proof of the theorem is complete.
\end{proof}

\section{Further remarks}
In \cite{BMS} some properties of ``fat" Sierpi\'nski triangles were studied.  These are the attractors of iterated function systems defined by $\{S_i \}_{i=1}^3, $ where
$$S_i (x_1,x_2)=r (x_1,x_2) +(1-r) p_i, \ r \in (\frac12 ,1), $$
and $p_i$ are three non-collinear points in $\D R^2$.
Their focus is on the calculation of the Hausdorff dimension of these fractals and, since such fractals do not satisfy the open set condition (OSC), the calculation of the Hausdorff dimension is highly non-trivial.  They also mention, in passing,  the attractors of the iterated function systems when $r\in (0, 1/2]$ and observe that the resulting fractals satisfy the open set condition, essentially disjoint and have fractal dimension
$\frac{\log 3}{- \log r} . $  Of course, when $0<r<\frac12, $ the fractals are totally disconnected.  The stretched Sierpi\`nski triangle we studied above is actually the case $r=\frac13. $

\begin{remark} Let $0<r_1, r_2, r_3<\frac 12$. Then, a general stretched Sierpi\`nski triangle can be constructed by the contractive mappings $S_1, S_2, S_3$ on $\D R^2$, such that
 $S_1(x_1, x_2)=r_1 (x_1, x_2)$, $S_2(x_1, x_2)=r_2 (x_1, x_2)+ (1-r_2)(1, 0)$, and  $S_3(x_1, x_2)=r_3 (x_1, x_2)+ (1-r_3)(\frac 12, \frac {\sqrt{3}}{2})$ for all $(x_1, x_2) \in \D R^2$; or, by the contractive mappings given by $T_1(x_1, x_2)=r_1 (x_1, x_2)$, $T_2(x_1, x_2)=r_2 (x_1, x_2)+ (1-r_2)(1, 0)$, and  $T_3(x_1, x_2)=r_3 (x_1, x_2)+ (1-r_3)(0, 1)$ for all $(x_1, x_2) \in \D R^2$. A general singular continuous probability measure $P$ on a stretched Sierpi\`nski triangle can be defined by $P=p_1P\circ S_1^{-1}+p_2P\circ S_2^{-1}+p_3P\circ S_3^{-1}$ where $(p_1, p_2, p_3)$ is a probability vector with $p_i>0$ for all $1\leq i\leq 3$. If $r_1=r_2=r_3=r$, then a general stretched Sierpi\`nski triangle reduces to the triangle considered in this paper. For a general probability distribution on a general stretched Sierpi\`nski triangle the optimal sets of $n$-means and the $n$th quantization error are not known yet for all $n\geq 2$.
\end{remark}

\section{Mathematica Code} \label{sec4}
Throughout the paper, for computation, we have used the following Mathematica Code.

\[\te{Code} \left\{\begin{array} {ll}
&r=\frac{1}{3}; \quad S[1][\{\text{x$\_$},\text{y$\_$}\}]:= r \{x,y\}; \quad S[2][\{\text{x$\_$},\text{y$\_$}\}]:=r \{x,y\}+(1-r) \{1,0\};\\
&S[3][\{\text{x$\_$},\text{y$\_$}\}]:=r \{x,y\}+(1-r)  \{\frac{1}{2}, \frac{\sqrt{3}}{2}\};\\
&Ex=\{\frac{1}{2},\frac{\sqrt{3}}{6}\}; \quad   V=\frac{1}{6}; \quad  L[J$\_$]\text{:=}\frac{1}{3^{\text{Length}[J]}}; \quad P[J$\_$]\text{:=}\frac{1}{3^{\text{Length}[J]}};\\
&M[\text{k$\_$}, \text{x$\_$}]:=\text{Module} [\{F, i\}, \ F=x; \  \text{Do} [F=S[i][F], \ \{i,\text{Reverse}[k]\}]; \ F];\\
&Av[x$\_$] := (1/\te{Sum}[P[i], \{i, x\}]) \te{Sum} [P[i]\, M[i, Ex] , \{i, x\}];\\
&Er[x$\_$, y$\_$] :=\te{Sum}[P[i] \Big(L[i]^2 V + \te{SquaredEuclideanDistance}[M[i, Ex], y]\Big), \{i, x\}];
\end{array}
\right.
\]
\subsection*{User Guide} First, one needs to copy and paste the above code in Mathematica Notebook. $S[1]$, $S[2]$, and $S[3]$ represent the three similarity mappings, and $r=\frac 13$ is the similarity ratio of the similarity mappings. $Ex$ and $V$ represent the expectation and the variance as given in Lemma~\ref{lemma333}. Then:

$(i)$ For any two words $\go:=\go_1\go_2\cdots\go_k$ and $\gt:=\gt_1\gt_2\cdots\gt_\ell$, we have
\begin{align*}
E(X : X\in \tri_\go)&=Av[\set{\set{\go_1, \go_2, \cdots, \go_k}}], \\
E(X : X\in \tri_\go\uu \tri_\gt)&=Av[\set{\set{\go_1, \go_2, \cdots, \go_k}, \set{\gt_1, \gt_2, \cdots, \gt_\ell}}].
\end{align*}
The above formula helps us to find the conditional expectations.

$(ii)$ For any $x:=\set{x_1, x_2}\in \D R^2$, and any $\go:=\go_1\go_2\cdots\go_k \in I^\ast$,
\[S_\go(x)=M[\set{\go_1\go_2\cdots\go_k}, \set{x_1, x_2}].\]
The above formula helps us to find the image of any point in $\D R^2$ under any composition mapping $S_\go:=S_{\go_1}\circ S_{\go_2}\circ \cdots \circ S_{\go_k}$.

$(iii)$  For any two words $\go:=\go_1\go_2\cdots\go_k$ and $\gt:=\gt_1\gt_2\cdots\gt_\ell$, and a point $a:=\set{a_1, a_2}\in \D R^2$, we have
\begin{align*}
\int_{\tri_\go}\|x-a\|^2 dP&=Er[\set{\set{\go_1, \go_2, \cdots, \go_k}}, \set{a_1, a_2}], \\
\int_{\tri_\go\uu \tri_\gt}\|x-a\|^2 dP&=Er[\set{\set{\go_1, \go_2, \cdots, \go_k}, \set{\gt_1, \gt_2, \cdots, \gt_\ell}}, \set{a_1, a_2}].
\end{align*}
The above formula helps us to find the distortion errors.
\qed
\subsection*{Acknowledgement} The authors are grateful to the referees for their valuable comments and suggestions.

\end{document}